\numberwithin{equation}{section}
\numberwithin{figure}{section}
\theoremstyle{plain}
\newtheorem{thm}{\protect\theoremname}[section]
\theoremstyle{plain}
\newtheorem{proposition}[thm]{Proposition}
\theoremstyle{definition}
\theoremstyle{plain}
\newtheorem{lem}[thm]{\protect\lemmaname}
\newtheorem{lemma}[thm]{Lemma}
\theoremstyle{remark}
\newtheorem{rem}[thm]{\protect\remarkname}
\theoremstyle{definition}
\newtheorem{definition}[thm]{Definition}
\newtheorem*{claim*}{Claim}
\theoremstyle{remark}
\theoremstyle{definition}
\newtheorem*{defn*}{\protect\definitionname}
\providecommand{\definitionname}{Definition}
\providecommand{\lemmaname}{Lemma}
\providecommand{\propositionname}{Proposition}
\providecommand{\remarkname}{Remark}
\providecommand{\theoremname}{Theorem}
\newcommand{\Rmnum}[1]{\expandafter\@slowromancap\romannumeral #1@}
\newcommand{\M}{{\mathcal M}}
\newcommand{\8}{\infty}
\newcommand{\la}{\langle}
\newcommand{\ra}{\rangle}
\newcommand{\be}{\begin{eqnarray*}}
	\newcommand{\ee}{\end{eqnarray*}}
\newcommand{\beq}{\begin{equation}}
	\newcommand{\eeq}{\end{equation}}
\newcommand{\beqn}{\begin{equation*}}
	\newcommand{\eeqn}{\end{equation*}}
\newcommand{\bs}{\begin{split}}
	\newcommand{\es}{\end{split}}
\begin{document}

	
	
	\title[Boundedness of operator-valued commutators]{Boundedness of operator-valued commutators involving martingale paraproducts}
	
	\thanks{{\it 2020 Mathematics Subject Classification:} Primary: 46L53, 60G42. Secondary: 46L52, 47B47}
	\thanks{{\it Key words:} Martingale paraproducts, semicommutative  martingales, commutators, $BMO$ spaces}
	
	\author[Zhenguo Wei]{Zhenguo Wei}
	\address{
		Laboratoire de Math{\'e}matiques, Universit{\'e} de Bourgogne Franche-Comt{\'e}, 25030 Besan\c{c}on Cedex, France}
	\email{zhenguo.wei@univ-fcomte.fr}

	\author[Hao Zhang]{Hao Zhang}
	\address{
		Department of Mathematics, University of Illinois Urbana-Champaign, USA}
	\email{hzhang06@illinois.edu}
	\date{}
	\maketitle
	
	\begin{abstract}
      Let $1<p<\infty$. We show the boundedness of operator-valued commutators $[\pi_a,M_b]$ on the noncommutative $L_p(L_\infty(\mathbb{R})\otimes \mathcal{M})$ for any von Neumann algebra $\mathcal{M}$, where $\pi_a$ is the $d$-adic martingale paraproduct with symbol $a\in BMO^d(\mathbb{R})$ and $M_b$ is the noncommutative left multiplication operator with $b\in BMO^d_\mathcal{M}(\mathbb{R})$. Besides, we consider the extrapolation property of semicommutative $d$-adic martingale paraproducts in terms of the $BMO^d_\mathcal{M}(\mathbb{R})$ space.
	\end{abstract}
	
	
	\section{Introduction}\label{Introduction}

Martingale paraproducts have become an important model in modern harmonic analysis. They have been successfully proved to be a fundamental and useful tool to study a variety of properties for singular integral operators and commutators. For example, Coifman and Semmes used dyadic martingale paraproducts to show the celebrated David-Journé $T1$ theorem. We refer the reader to \cite{CJS}, \cite{DaJou} and \cite{Per} for more details about the $T1$ theorem and its dyadic proof. As an extension of the work of David-Journé and Coifman-Semmes, Nazarov, Treil and Volberg applied dyadic martingale paraproducts to the $T1$ and $Tb$ theorem on non-homogeneous spaces in \cite{NTV}.
Refining the method of Nazarov, Treil and Volberg \cite{NTV}, Hyt\"{o}nen used dyadic martingale paraproducts  and dyadic shift operators to give a new dyadic representation for general singular integral operators to finally settle the famous $A_2$ conjecture in \cite{TH1}.
 We would like to point out that in \cite{Bony} Bony gave the systematic study of paraproducts in the context of the theory of paradifferential operators.

Apart from the close connection between martingale paraproducts and harmonic analysis, it is of independent interest to study martingale paraproducts in its own right. These operators are generalizations of Hankel type operators, an important class of operators in function theory. The boundedness of Hankel operators has been investigated by Nehari \cite{Neha} in terms of the $BMO$ space. We refer the reader to \cite{Hankel} and \cite{PJ} for more details about Hankel operators.

Let $\mathcal{M}$ be a von Neumann algebra equipped with a normal semifinite faithful trace $\tau$. Denote by $\pi_b$ the dyadic martingale paraproduct with symbol $b\in L^{\rm{loc}}_1({\mathbb{R}}, L_1(\M))$. Recently, the operator-valued $T1$ problem has attracted wide attention in noncommutative harmonic analysis. This is closely related to the boundedness of semicommutative martingale paraproducts in view of Hyt\"{o}nen's dyadic representation \cite{TH1}. The reader is referred to \cite{HM} for more details. However, Mei has shown in \cite{Mei} that in general,  $\|\pi_b\|_{B(L_2(L_\infty(\mathbb{R})\otimes \mathcal{M}))}$ cannot even be dominated by the operator norm $\|b\|_{L_\infty(\mathbb{R})\otimes \mathcal{M}}$ for infinite-dimensional $\M$ .

Hence, it is extremely difficult to investigate the boundedness of $\pi_b$ on $L_2(L_\infty(\mathbb{R})\otimes \mathcal{M})$. Indeed, when $\M=\mathbb{M}_{n}$ is the algebra of all $n\times n$ matrices, Katz employed an ingenious stopping time procedure in \cite{KNH} to show
\begin{equation}\label{Katz}
\|\pi_b\|_{B(L_2(L_\infty(\mathbb{R})\otimes \mathbb{M}_{n}))}\lesssim \log (n+1) \|b\|_{BMO_{so}^2(\mathbb{R}, \mathbb{M}_{n})},
\end{equation}
where 
\begin{equation*}
BMO_{so}^d(\mathbb{R}, \mathbb{M}_{n})=\sup_{x\in \mathbb{C}^n\atop \|x\|\le1} \sup_{I\in \mathcal{D}}\bigg(\frac{1}{m(I)}\int_I \Bigl\|\Big(b-\big(\frac{1}{m(I)} \int_I b\ d m\big)\Big)x\Bigr\|_{\mathbb{C}^n}^2 dm\biggr)^{1/2}<\infty.
\end{equation*}
We denote by $m$ Lebesgue measure, and $\mathcal{D}$ the family of all $d$-adic intervals on $\mathbb{R}$. We refer the reader to \cite{Mei1} for more information for such strong operator $BMO$ spaces.

Nazarov, Treil and Volberg also independently obtained  (\ref{Katz}) in \cite{NTV1} by the Bellman method, and they also gave an example showing that for any $n\in \mathbb{N}$ there exists $b$ such that
$$  \|\pi_b\|_{B(L_2(L_\infty(\mathbb{R})\otimes \mathbb{M}_{n}))}\gtrsim \sqrt{\log (n+1)} \|b\|_{BMO_{so}^2(\mathbb{R}, \mathbb{M}_{n})}.  $$
This implies that the boundedness of $\pi_b$ cannot be characterized solely by $BMO_{so}^2(\mathbb{R}, \mathcal{M})$ for infinite-dimensional $\M$. In \cite{NPTV}, Nazarov, Pisier, Treil and Volberg proved that $\log{(n+1)}$ is the optimal order of the best constant in (\ref{Katz}). It still remains open how to characterize the boundedness of $\pi_b$ on $L_2(L_\infty(\mathbb{R})\otimes \mathcal{M})$.

Although we do not know how to describe the boundedness of semicommutative martingale paraproducts until now, some particular properties of $\pi_b$ can be investigated. For instance, Mei succeeded in describing the extrapolation property by virtue of  $BMO^2_{\mathcal{M}}(\mathbb{R})$ for dyadic martingale paraproducts  in \cite{Mei2}. Recall that $BMO^d_{\mathcal{M}}(\mathbb{R})$ is the operator-valued $BMO$ space associated with the $d$-adic martingales consisting of all $\mathcal{M}$-valued functions $b$ that are Bochner integrable on any $d$-adic interval such that
\begin{equation}\label{BMOMd}
\|b\|_{BMO^d_\mathcal{M}(\mathbb{R})}=\sup_{I\in \mathcal{D}}\biggl(\frac{1}{m(I)}\int_I \Bigl\|b-\big(\frac{1}{m(I)} \int_I b\ d m\big)\Bigr\|_\mathcal{M}^2 dm\biggr)^{1/2}<\infty.
\end{equation} 
When $\mathcal{M}=\mathbb{C}$, we denote it by $BMO^d(\mathbb{R})$ for simplicity. This coincides with the martingale $BMO$ spaces associated with $d$-adic martingales in the commutative setting.


In \cite{WZ2024}, we use Hyt\"{o}nen's dyadic representation \cite{TH1} to prove the boundedness of the operator-valued commutator $[T, M_b]=T M_b - M_b T$ on $L_2(L_\infty(\mathbb{R})\otimes \mathcal{M})$, where $T$ is a general bounded singular integral operator on $L_2(\mathbb{R}^n)$ and $M_b$ is the left multiplication by $b$. During our proof, the operator-valued commutator $[\pi_a, M_b]$ naturally emerges for $a\in BMO(\mathbb{R}^n)$, where $[\pi_a, M_b]$ is given by
$$  [\pi_a, M_b](f)=\pi_a(M_b(f))-M_b(\pi_a(f))=\pi_a(b\cdot f)-b\cdot \pi_a(f) \quad \forall f\in  L_2(L_\infty(\mathbb{R})\otimes \mathcal{M}). $$
They establish the boundedness of the commutator $[\pi_a, M_b]$ on $L_2(L_\infty(\mathbb{R})\otimes \mathcal{M})$.

 Motivated by the aforementioned work, this article is devoted to the boundedness of operator-valued commutators $[\pi_a, M_b]$ on $L_p(L_\infty(\mathbb{R})\otimes \mathcal{M})$ for all $1<p<\infty$. At first, we introduce semicommutative $d$-adic martingale paraproducts. Let $d\geq 2$ be a natural number. Given a semicommutative $d$-adic martingale $b=(b_k)_{k\in\mathbb{Z}}\in L^{\rm{loc}}_1({\mathbb{R}}, L_1(\M))$, the martingale paraproduct with symbol $b$ is defined as follows: $\forall f=(f_k)_{k\in\mathbb{Z}}\in L_2(L_\infty(\mathbb{R})\otimes \mathcal{M})$
\begin{equation*}
	\pi_b(f)=\sum_{k=-\infty}^{\infty}d_kb \cdot f_{k-1},
\end{equation*}
where $d_kb=b_k-b_{k-1}$ for any $k\in\mathbb{Z}$. See Subsection \ref{sec2.3} for the definition of semicommutative $d$-adic martingales. When $d=2$, $d$-adic martingales are reduced to dyadic martingales. 


The following theorem concerns the boundedness of operator-valued commutators involving  $d$-adic martingale paraproducts.
\begin{thm}\label{3}
	Let $1<p<\infty$. If $a\in BMO^d(\mathbb{R})$ and $b\in BMO^d_\mathcal{M}(\mathbb{R})$, then $[\pi_a,M_b]$ is bounded on $L_p(L_\infty(\mathbb{R})\otimes \mathcal{M})$ and
	\begin{equation*}
	\|[\pi_a,M_b]\|_{L_p(L_\infty(\mathbb{R})\otimes \mathcal{M})\to L_p(L_\infty(\mathbb{R})\otimes \mathcal{M})}\lesssim_{d,p} \|a\|_{BMO^d(\mathbb{R})}\|b\|_{BMO^d_{\mathcal{M}}(\mathbb{R})}.
	\end{equation*}
\end{thm}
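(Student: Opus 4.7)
The plan is to decompose $[\pi_a,M_b]$ algebraically into a main piece $\pi_b\pi_a$ plus bracket-type remainders, and to control each using the $L_p$-boundedness of semicommutative paraproducts together with noncommutative martingale inequalities. Since $d_k a$ is scalar-valued and hence commutes with every $\mathcal{M}$-valued quantity, I would start from
\[
[\pi_a,M_b](f)=\sum_k(d_k a)\bigl[E_{k-1}(bf)-b\, f_{k-1}\bigr].
\]
Writing $E_{k-1}(bf)-b\,f_{k-1}=E_{k-1}((b-b_{k-1})(f-f_{k-1}))-(b-b_{k-1})f_{k-1}$, which follows from $E_{k-1}((b-b_{k-1})f_{k-1})=0$, I split $[\pi_a,M_b]f=B_1 f-B_2 f$ with
\[
B_1 f=\sum_k(d_k a)\,E_{k-1}\bigl((b-b_{k-1})(f-f_{k-1})\bigr),\qquad B_2 f=\sum_k(d_k a)(b-b_{k-1})f_{k-1}.
\]

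For $B_2$, swapping the order of summation using $b-b_{k-1}=\sum_{j\ge k}d_j b$ and isolating the diagonal term $j=k$ gives
\[
B_2 f=\pi_b(\pi_a f)+R(f),\qquad R(f)=\sum_j(d_j a)(d_j b)\,f_{j-1}=\Lambda(b,\pi_a f),
\]
where $\Lambda(b,g)=\sum_j(d_j b)(d_j g)$ is the martingale bracket. The composition $\pi_b\pi_a$ is handled directly: the scalar-symbol paraproduct $\pi_a$ is $L_p$-bounded on $L_p(L_\infty(\mathbb{R})\otimes\mathcal{M})$ with norm $\lesssim\|a\|_{BMO^d(\mathbb{R})}$ (the semicommutative extension of the classical dyadic fact), and the operator-valued paraproduct $\pi_b$ is $L_p$-bounded with norm $\lesssim\|b\|_{BMO^d_\mathcal{M}(\mathbb{R})}$ by the extrapolation theorem for semicommutative $d$-adic paraproducts established earlier in this paper (generalizing Mei \cite{Mei2}). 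Composition thus yields $\|\pi_b\pi_a f\|_p\lesssim\|a\|_{BMO^d}\|b\|_{BMO^d_\mathcal{M}}\|f\|_p$. A parallel reduction for $B_1$ uses that $E_{k-1}((d_j b)(d_\ell f))=0$ for $j\neq\ell$ to collapse the double sum to the diagonal, giving $B_1 f=\sum_k(d_k a)\sum_{j\ge k}E_{k-1}((d_j b)(d_j f))=\sum_j E_j[\pi_a((d_j b)(d_j f))]$.

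The main obstacle is bounding the bracket-type remainders $R(f)$ and $B_1(f)$ in $L_p$. These are not standard paraproducts: the coefficients involve products of martingale differences, and neither $\pi^*_b$ nor the bilinear map $\Lambda(b,\cdot)$ is individually $L_p$-bounded for a general $b\in BMO^d_\mathcal{M}(\mathbb{R})$. I plan to expand $R$ and $B_1$ in the Haar-type resolution of the $d$-adic filtration, write each as a martingale transform whose $\mathcal{M}$-valued coefficients encode the products $(d_j a)(d_j b)$ or $(d_j b)(d_j f)$, and bound the resulting column/row square functions using the noncommutative Burkholder--Gundy and Junge--Xu inequalities. The scalar-valuedness of $a$ is essential here: it allows $d_j a$ to be pulled freely through every $\mathcal{M}$-valued expression and extracted as a scalar weight controlled by a John--Nirenberg estimate for $a\in BMO^d(\mathbb{R})$, while the operator-valued oscillations of $b$ are absorbed into the $BMO^d_\mathcal{M}$-controlled column/row square functions. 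Summing the estimates on $\pi_b\pi_a f$, $R(f)$, and $B_1(f)$ then yields the theorem.
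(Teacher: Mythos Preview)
Your algebraic decomposition $[\pi_a,M_b]f=B_1f-B_2f$ is correct and in fact coincides with the paper's identity $[\pi_a,M_b]=V_{a,b}-\Theta_b\pi_a$: your $B_1$ is exactly the paper's $V_{a,b}$, and your $B_2=\pi_b\pi_a+\Lambda_b\pi_a=\Theta_b\pi_a$.

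The gap is in the next step. You assert that ``the operator-valued paraproduct $\pi_b$ is $L_p$-bounded with norm $\lesssim\|b\|_{BMO^d_\mathcal{M}(\mathbb{R})}$ by the extrapolation theorem.'' This is false, and it is the central obstruction the paper is built around. The extrapolation result (Theorem~\ref{2}) says only that \emph{if} $\pi_b$ is bounded on some $L_p$, then it is bounded on all of them; it does not produce boundedness from $b\in BMO^d_\mathcal{M}(\mathbb{R})$. Indeed, the introduction recalls the Katz and Nazarov--Treil--Volberg results showing $\|\pi_b\|$ can blow up like $\log n$ for $\mathcal{M}=\mathbb{M}_n$ even though the $BMO$-type norms stay bounded. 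So you cannot bound $\pi_b\pi_a$ on its own, and since you already acknowledge that $\Lambda_b$ is not individually bounded either, splitting $B_2=\pi_b\pi_a+\Lambda_b\pi_a$ leaves you with two unbounded pieces.

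The paper's remedy is to regroup differently: write $\Theta_b\pi_a=(\pi_b+(\pi_{b^*})^*)\pi_a+(\Lambda_b-(\pi_{b^*})^*)\pi_a$. The first factor \emph{is} $L_p\to L_p$ bounded by $\|b\|_{BMO^d_\mathcal{M}}$ (Proposition~\ref{pibsumpibstar}), so $(\pi_b+(\pi_{b^*})^*)\pi_a$ is handled by composition with the scalar $\pi_a$. The remaining combination $V_{a,b}-(\Lambda_b-(\pi_{b^*})^*)\pi_a$ is then paired against a test function $g$, rewritten as $\langle b,W_{a,f,g}\rangle$, and estimated via the duality $BMO^d_\mathcal{M}(\mathbb{R})\hookrightarrow(H^d_{1,\max}(\mathbb{R},\mathcal{M}))^*$; the $H^d_{1,\max}$ norm of $W_{a,f,g}$ is controlled using Lemma~\ref{af}, vector-valued Doob, and the noncommutative Burkholder--Gundy inequalities. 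Your square-function plan for $R$ and $B_1$ separately cannot succeed for the same reason $\pi_b$ fails: the operator-valued oscillations of $b$ only become controllable after this symmetric regrouping and the passage through $H^d_{1,\max}$ duality.
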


\begin{rem}
	When $\M=\mathbb{C}$, Theorem \ref{3} is shown in \cite{WZ2024}.
	\end{rem}

Assume $a\in BMO^d(\mathbb{R})$. It is clear that $\pi_a$ is bounded on $L_2(L_\infty(\mathbb{R})\otimes \mathcal{M})$ since $\pi_a$ is bounded on $L_2(\mathbb{R})$. To show Theorem \ref{3}, the boundedness of $\pi_a$ on $L_p(L_\infty(\mathbb{R})\otimes \mathcal{M})$ $(1<p<\8)$ will be needed. To this end, we need to generalize the extrapolation property of $\pi_b$ \cite{Mei2} to semicommutative $d$-adic martingales for any $d\geq 2$.
\begin{thm}\label{2}
	Let $1<p<\infty$ and $b\in BMO^d_\mathcal{M}(\mathbb{R})$. If $\pi_b$ is bounded on $L_p(L_\infty(\mathbb{R})\otimes \mathcal{M})$ for some $1<p<\infty$, then it is bounded on $L_p(L_\infty(\mathbb{R})\otimes \mathcal{M})$ for all $1<p<\infty$.
\end{thm}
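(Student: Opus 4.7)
The plan is to adapt Mei's extrapolation argument \cite{Mei2} for dyadic martingale paraproducts to the $d$-adic semicommutative setting. The key observation is that the hypothesis $b \in BMO^d_\mathcal{M}(\mathbb{R})$ alone suffices to yield endpoint boundedness of $\pi_b$ from $L_\infty$ into a suitable noncommutative martingale BMO space and from a noncommutative $H_1$ into $L_1$; combining these endpoints with the assumed $L_{p_0}$ boundedness via noncommutative martingale interpolation then produces $L_p$ boundedness of $\pi_b$ for every $1 < p < \infty$.

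First I would establish the endpoint estimate $\pi_b : L_\infty(L_\infty(\mathbb{R}) \otimes \mathcal{M}) \to BMO^d(L_\infty(\mathbb{R}) \otimes \mathcal{M})$ with constant $\lesssim \|b\|_{BMO^d_\mathcal{M}(\mathbb{R})}$. For $f \in L_\infty$ and a $d$-adic interval $I$ of generation $k_I$, one writes
\begin{equation*}
\pi_b(f) - \tfrac{1}{m(I)}\int_I \pi_b(f)\, dm = \sum_{k > k_I} d_k b \cdot f_{k-1} \quad \text{on } I.
\end{equation*}
By orthogonality of martingale differences, the column BMO norm of $\pi_b(f)$ on $I$ is controlled by $\bigl\|\tfrac{1}{m(I)}\int_I \sum_{k > k_I} f_{k-1}^* |d_k b|^2 f_{k-1}\, dm\bigr\|_\mathcal{M}$. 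Combining the martingale identity
\begin{equation*}
\tfrac{1}{m(I)}\int_I \sum_{k > k_I} |d_k b|^2\, dm = \tfrac{1}{m(I)}\int_I \bigl|b - \tfrac{1}{m(I)}\int_I b\, dm\bigr|^2\, dm,
\end{equation*}
whose operator norm is $\leq \|b\|^2_{BMO^d_\mathcal{M}(\mathbb{R})}$, with the uniform $L_\infty$ bound on $f_{k-1}$ gives the column estimate; the row piece follows symmetrically. Dually (via the adjoint paraproduct) one obtains $\pi_b : H_1^d \to L_1$ with the same constant.

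At this point I would invoke the noncommutative martingale interpolation theorems of Musat and Junge--Mei: $[BMO^d, L_{p_0}]_\theta = L_p$ for $p \geq p_0$ with $\theta = p_0/p$, and $[H_1^d, L_{p_0}]_\theta = L_p$ for $p \leq p_0$. Interpolating the endpoint bounds (which depend only on the $BMO^d_\mathcal{M}(\mathbb{R})$ hypothesis) against the assumed $L_{p_0}$ boundedness of $\pi_b$ yields $L_p$ boundedness for every $1 < p < \infty$.

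The main obstacle I anticipate is the rigorous verification of the $L_\infty \to BMO^d$ endpoint in the noncommutative setting: passing from the scalar bound $\|f_{k-1}\|_\mathcal{M} \leq \|f\|_\infty$ to an operator-valued estimate on the sum $\sum f_{k-1}^* |d_k b|^2 f_{k-1}$ is delicate because $f_{k-1}$ and $d_k b$ do not commute in general. The argument must exploit that the strong operator-norm $BMO^d_\mathcal{M}(\mathbb{R})$ dominates both the column and row operator-valued BMO, so that the problematic pointwise majorization $f_{k-1}^* |d_k b|^2 f_{k-1} \leq \|f\|_\infty^2 |d_k b|^2$ can be replaced by an operator-norm bound after taking conditional expectation. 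Once this endpoint is secured, the remaining interpolation step is routine.
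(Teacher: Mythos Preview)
Your proposed endpoint $\pi_b : L_\infty \to BMO^d_{cr}$ with constant $\lesssim \|b\|_{BMO^d_\mathcal{M}(\mathbb{R})}$, established \emph{without} using the assumed $L_{p_0}$ boundedness, cannot hold. If it did, the dual endpoint together with interpolation would give $\|\pi_b\|_{L_2\to L_2}\lesssim \|b\|_{BMO^d_\mathcal{M}(\mathbb{R})}$ for every von Neumann algebra $\mathcal{M}$; but the introduction of the paper recalls that this is false (Mei's observation, and the Katz/Nazarov--Treil--Volberg $\log(n+1)$ lower bounds for $\mathcal{M}=\mathbb{M}_n$). The obstacle you flag is therefore not a technicality but a genuine obstruction: the column quantity $\mathbb{E}_m\sum_{k>m} f_{k-1}^*\,|d_k b|^2\,f_{k-1}$ admits no useful operator bound by $\|f\|_\infty^2\sum_{k>m}|d_k b|^2$ because $f_{k-1}$ and $d_k b$ do not commute. (The row part does work: $d_k b\,f_{k-1}f_{k-1}^*\,d_k b^*\le \|f\|_\infty^2 |d_k b^*|^2$, giving $\|\pi_b(f)\|_{BMO_r^d}\le \|f\|_\infty\|b\|_{BMO_r^d}$; the failure is purely on the column side.) Your dual claim $\pi_b:H_1^d\to L_1$ has the same defect, and in any case duality from $\pi_b:L_\infty\to BMO$ yields a bound on $\pi_b^*$, not on $\pi_b$.

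The paper's argument avoids this by feeding the $L_{p_0}$ hypothesis into the endpoint: Mei's Lemma~3.1 gives
\[
\|\pi_b\|_{L_\infty\to BMO^d_{cr}}\lesssim_{p_0}\|\pi_b\|_{L_{p_0}\to L_{p_0}}+\|b\|_{BMO_r^d},
\]
so the column part of the endpoint is purchased with the $L_{p_0}$ bound rather than with $\|b\|_{BMO^d_\mathcal{M}}$ alone. Interpolation with Lemma~2.4 then covers $p>p_0$. For $p<p_0$ the paper does \emph{not} attempt an $H_1\to L_1$ endpoint; instead it uses that the symmetrized paraproduct $\pi_b+(\pi_{b^*})^*$ is bounded on every $L_p$ with norm $\lesssim_{d,p}\|b\|_{BMO^d_\mathcal{M}}$ (Proposition~3.2, via $H^d_{1,\max}$--$BMO^d_\mathcal{M}$ duality). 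This transfers the known boundedness of $\pi_b$ on $L_{p'}$ ($p'>p_0$) to boundedness of $\pi_{b^*}$ on $L_p$ ($p<p_0'$), and then one bootstraps by rerunning the first step with $b$ replaced by $b^*$. The essential new ingredient you are missing is precisely this use of $\pi_b+(\pi_{b^*})^*$ to cross from one side of $p_0$ to the other.
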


Mei considered the extrapolation property only for the dyadic martingale, i.e. $d=2$ in \cite{Mei2}. We extend this property to general $d\geq 3$.


Another ingredient of the proof of Theorem \ref{3} is the bilinear decomposition of $M_b$. Assume $b\in L_2(L_\infty(\mathbb{R})\otimes \mathcal{M})$. Define for any $f\in L_2(L_\infty(\mathbb{R})\otimes \mathcal{M})$
\begin{equation}\label{rab}
	\varLambda_b(f)=\sum_{k\in\mathbb{Z}}d_kb\cdot d_kf \quad \text{and} \quad R_b(f)=\sum_{k\in\mathbb{Z}}b_{k-1}\cdot d_kf.
\end{equation}
Note that for $b, f\in L_2(L_\infty(\mathbb{R})\otimes \mathcal{M})$,  $M_b(f)=\pi_b(f)+\varLambda_b(f)+R_b(f)$.

As mentioned before, $b\in BMO^d_\mathcal{M}(\mathbb{R})$ does not guarantee the boundedness of $\pi_b$.  We need to consider the sum of $\pi_b$ and $\varLambda_b$. Hence, we  define
\begin{equation}\label{theta}
	\Theta_b=\pi_b+\varLambda_b.
\end{equation}
We refer to \cite{HM} for the Hardy space $h_{p,c}^d(\mathbb{R})$ of $d$-adic martingales.
\begin{thm}\label{1}
	Let $1<p<\infty$ and $b\in BMO^d_\mathcal{M}(\mathbb{R})$.
	
		$\mathrm{(1)}$ If $2\le p< \infty$, then $\Theta_b$ is bounded from $L_p(L_\infty(\mathbb{R})\otimes \mathcal{M})$ to $h_{p,c}^d(\mathbb{R})$ and
		\begin{equation*}
			\|\Theta_b\|_{L_p(L_\infty(\mathbb{R})\otimes \mathcal{M})\to h_{p,c}^d(\mathbb{R})}\lesssim_{d,p} \|b\|_{BMO^d_\mathcal{M}(\mathbb{R})}.
		\end{equation*}
		
		$\mathrm{(2)}$ If $1< p\le 2$, then $\Theta_b$ is bounded from $h_{p,c}^d(\mathbb{R})$ to $L_p(L_\infty(\mathbb{R})\otimes \mathcal{M})$ and 
		\begin{equation*}
			\|\Theta_b\|_{h_{p,c}^d(\mathbb{R})\to L_p(L_\infty(\mathbb{R})\otimes \mathcal{M})}\lesssim_{d,p} \|b\|_{BMO^d_\mathcal{M}(\mathbb{R})}.
		\end{equation*}
\end{thm}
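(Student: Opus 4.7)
The plan is to reduce part (2) to part (1) by duality and then prove part (1) by analyzing the two paraproducts $\pi_b f$ and $\varLambda_b f$ separately. For the duality, a trace-cyclicity computation using $f_k=\mathbb{E}_k f$ gives $\langle\Theta_b f,g\rangle=\langle f,\Theta_{b^*}g\rangle$, so the $L_2$-adjoint of $\Theta_b$ equals $\Theta_{b^*}$. Combined with $(h_{p,c}^d)^*=h_{p',c}^d$ for $1<p<\8$ and the isometry $\|b^*\|_{BMO^d_\M}=\|b\|_{BMO^d_\M}$, this reduces part (2) for $1<p\le 2$ to part (1) for the conjugate exponent $p'\ge 2$. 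It therefore suffices to prove part (1).

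For $\pi_b f$, the $n$-th martingale difference is simply $d_n(\pi_b f)=d_n b\cdot f_{n-1}$, so the main task is the noncommutative Carleson embedding
\[
\Bigl\|\Bigl(\sum_n f_{n-1}^*\gamma_n f_{n-1}\Bigr)^{1/2}\Bigr\|_p\lesssim \|b\|_{BMO^d_\M}\|f\|_p \qquad (p\ge 2),
\]
with $\gamma_n:=\mathbb{E}_{n-1}|d_n b|^2$. Its Carleson measure hypothesis $\bigl\|\mathbb{E}_k\sum_{n>k}\gamma_n\bigr\|_\8\le\|b\|_{BMO^d_\M}^2$ follows at once from the orthogonality identity $\mathbb{E}_k|b-b_k|^2=\sum_{n>k}\mathbb{E}_k|d_n b|^2$ together with the definition (\ref{BMOMd}) of $BMO^d_\M$; the embedding itself is then obtained via noncommutative Doob's maximal and Stein's inequalities, which carry out the passage from the predictable sequence $(f_{n-1})$ to $f$.

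For $\varLambda_b f$, a direct computation yields
\[
d_n(\varLambda_b f)=d_n b\cdot d_n f-\mathbb{E}_{n-1}(d_n b\cdot d_n f)+\sum_{j>n}(\mathbb{E}_n-\mathbb{E}_{n-1})\mathbb{E}_{j-1}(d_j b\cdot d_j f).
\]
The leading piece is controlled by the pointwise operator inequality $|d_n b\cdot d_n f|^2\le\|d_n b\|_\M^2\,|d_n f|^2\le d\|b\|_{BMO^d_\M}^2\,|d_n f|^2$, the last step being immediate from the definition of $BMO^d_\M$ (the average of $\|b_{I_i}-b_I\|_\M^2$ over the children of any $d$-adic interval $I$ is bounded by $\|b\|_{BMO^d_\M}^2$). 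Summing in $n$ turns the $h_{p,c}^d$-norm of this piece into a multiple of $\|s_c(f)\|_p=\|f\|_{h_{p,c}^d}\lesssim\|f\|_p$ (noncommutative Stein inequality for $p\ge 2$). The tail term is handled by the same estimate combined with the orthogonality in $n$ of the $(\mathbb{E}_n-\mathbb{E}_{n-1})$-differences.

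The main obstacle will be the noncommutative Carleson embedding for $\pi_b f$: unlike the scalar case, $\gamma_n$ and $f_{n-1}^*f_{n-1}$ do not commute, so the sum must be arranged in the sandwich form $f_{n-1}^*\gamma_n f_{n-1}$, and one must carefully track the placement of conditional expectations. The strong $BMO^d_\M$ hypothesis---controlling $\|b-b_I\|_\M$ rather than only the weaker noncommutative column BMO norm---is precisely what allows one to dominate $|d_n b|^2$ pointwise (on each $d$-adic cell) by a scalar multiple of $1_\M$, reducing the operator-valued Carleson embedding to an essentially scalar estimate for the sequence $(f_{n-1})$.
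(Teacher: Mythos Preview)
Your duality reduction of (2) to (1) via $(\Theta_b)^*=\Theta_{b^*}$ is fine. The fatal gap is in part (1): you propose to bound $\pi_b$ and $\varLambda_b$ \emph{separately} from $L_p$ to $h^d_{p,c}$, but neither operator admits such a bound under the hypothesis $b\in BMO^d_\M(\mathbb{R})$. Indeed, take $p=2$, where $h^d_{2,c}=L_2$ isometrically; your Carleson embedding for $\pi_b$ would yield
\[
\|\pi_b\|_{L_2(L_\infty(\mathbb{R})\otimes\M)\to L_2(L_\infty(\mathbb{R})\otimes\M)}\lesssim \|b\|_{BMO^d_\M(\mathbb{R})},
\]
which is precisely what Mei's example (recalled in the introduction) shows is \emph{false}: $\|\pi_b\|_{B(L_2)}$ cannot even be dominated by $\|b\|_{L_\infty(\mathbb{R})\otimes\M}\ge\tfrac12\|b\|_{BMO^d_\M}$. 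Concretely, your ``reduction to a scalar Carleson estimate'' fails because the scalar Carleson hypothesis you would need, namely $\sup_k\bigl\|\mathbb{E}_k\sum_{n>k}\|d_n b\|_\M^2\bigr\|_{L_\infty(\mathbb{R})}\lesssim\|b\|_{BMO^d_\M}^2$, is \emph{not} a consequence of $b\in BMO^d_\M$; the pointwise bound $|d_nb|^2\le\|d_nb\|_\M^2\,1_\M$ loses the orthogonality among the $d_nb$ that the operator-valued BMO norm exploits. The same obstruction appears in your treatment of $\varLambda_b$: the ``tail term'' in your decomposition is exactly
\[
\sum_{j>n}(\mathbb{E}_n-\mathbb{E}_{n-1})\mathbb{E}_{j-1}(d_jb\cdot d_jf)=d_n\bigl((\pi_{b^*})^*f\bigr),
\]
so its $h^d_{2,c}$-norm equals $\|(\pi_{b^*})^*f\|_2$, again uncontrollable by $\|b\|_{BMO^d_\M}$. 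In short, there is genuine cancellation between $\pi_b$ and $\varLambda_b$ that your splitting destroys.

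The paper's proof exploits this cancellation by writing instead
\[
\Theta_b=\bigl(\pi_b+(\pi_{b^*})^*\bigr)+\bigl(\varLambda_b-(\pi_{b^*})^*\bigr).
\]
The symmetric paraproduct $\pi_b+(\pi_{b^*})^*$ is bounded on $L_p$ for all $1<p<\infty$ via the $H^d_{1,\max}$--$BMO^d_\M$ duality (Proposition~\ref{pibsumpibstar}), and the remainder $\varLambda_b-(\pi_{b^*})^*=\sum_k d_k(d_kb\cdot d_kf)$ (your ``leading piece'') is handled by a direct Haar-basis computation (Proposition~\ref{lambdabpib}) that does use the pointwise bound $\|\langle h_I^i,b\rangle\|_\M\lesssim|I|^{1/2}\|b\|_{BMO^d_\M}$, in the spirit of your last paragraph. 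Your intuition about the leading piece was correct; it is the other two pieces that require the different decomposition.
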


It is shown that Theorem \ref{1} holds when $p=2$ in \cite{WZ2024}. Theorem \ref{1} also coincides \cite[Proposition A.2]{HM}. However, it seems that the proof of \cite[Proposition A.2]{HM} contains a small gap. Thus we fix this gap. In addition, we consider general $d$-adic martingales, while the authors in \cite{HM} only deal with dyadic martingales on $\mathbb{R}^n$.

The paper is organized as follows. Section \ref{pre2} is devoted to notation and background, such as noncommutative $L_p$-spaces and $d$-adic martingales. In Section \ref{pthm2}, we will show Theorem \ref{2}. Our proof is based on Mei's method in \cite{Mei}. In Section \ref{pthm1}, we aim to prove Theorem \ref{1}. At the end, we will show Theorem \ref{3} in Section \ref{pthm3}. Our main ingredient is to use the duality that $BMO_\mathcal{M}^d(\mathbb{R})$ embeds continuously into the dual $(H^d_{1,\max}(\mathbb{R},\mathcal{M}))^*$ (see Subsection \ref{sec2.4}). 

Throughout this paper, we will use the following notation: $A\lesssim B$ (resp. $A\lesssim_\varepsilon B$) means that $A\le CB$ (resp. $A\le C_\varepsilon B$) for some absolute positive constant $C$ (resp. a positive constant $C_\varepsilon$ depending only on $\varepsilon$). $A\approx B$ or $A\approx_\varepsilon B$ means that these inequalities as well as their inverses hold. The index $p$ will be always assumed to be in $[1, \8]$. Denote by $p'=\frac{p}{p-1}$ the conjugate index  of $p$.

\bigskip

\section{Preliminaries}\label{pre2}

In this section, we provide notation and background that will be used throughout the paper.

\subsection{\textbf{Noncommutative $L_p$-spaces.}} Let $\mathcal{M}$ be a von Neumann algebra equipped with a normal semifinite faithful trace $\tau$. Denote by $\M_+$ the positive part of $\M$. Let $\mathcal{S}_+(\M)$ be the set of all $x\in \M_+$ whose support projection has a finite trace, and $\mathcal{S}(\mathcal{M})$ be the linear span of $\mathcal{S}_+(\mathcal{M})$. Then $\mathcal{S}(\mathcal{M})$ is a $w^*$-dense $*$-subalgebra of $\mathcal{M}$. Let $x\in\mathcal{S}(\mathcal{M})$, then $|x|^p\in\mathcal{S}(\mathcal{M})$ for any $0<p<\infty$, where $|x|:=(x^*x)^{1/2}$. Define
\[\|x\|_p=(\tau(|x|^p))^{1/p}.\]
Thus $\|\cdot\|_p$ is a norm for $p\ge 1$, and a $p$-norm for $0<p<1$. The noncommutative $L_p$-space associated with $(\mathcal{M},\tau)$ is the completion of $(\mathcal{S}(\M),\|\cdot\|_p)$ for $0<p<\8$  denoted by $L_p(\mathcal{M},\tau)$. We also write $L_p(\mathcal{M},\tau)$ simply by $L_p(\mathcal{M})$ for short. When $p=\8$, we set $L_\infty(\mathcal{M}):=\mathcal{M}$ equipped with the operator norm. In particular, when $p=2$, $L_2(\M)$ is a Hilbert space. We will view $\mathcal{M}$ as a von Neumann algebra on $L_2(\mathcal{M})$ by left multiplication, namely  $\M \hookrightarrow B(L_2(\M))$ via the embedding $x\longmapsto L_x\in B(L_2(\M))$, where $x\in \M$ and $L_x(y):= x\cdot y\in L_2(\M)$ for any $y\in L_2(\M)$. Hence in this way, $\mathcal{M}$ is in its standard form. It is well-known that for $1\leq p<\8$ and $p'=\frac{p}{p-1}$
$$  \big(L_p(\M)\big)^*=L_{p'}(\M).  $$
We refer the reader to \cite{PX} for a detailed exposition of noncommutative $L_p$-spaces.

\

Now we present the tensor product of von Neumann algebras. Assume that each $\mathcal{M}_k$ $(k=1, 2)$ is equipped with a normal semifinite faithful trace $\tau_k$. Then the tensor product of $\M_1$ and $\M_2$ denoted by $\mathcal{M}_1 \otimes \mathcal{M}_2$ is the $w^*$-closure of $\text{span}\{x_1\otimes x_2 | x_1\in \M_1, x_2\in \M_2\}$ in $B(L_2(\M_1)\otimes L_2(\M_2))$. Here $L_2(\M_1)\otimes L_2(\M_2)$ is the Hilbert space tensor product of $L_2(\M_1)$ and $ L_2(\M_2)$.

It is well-known that there exists a unique normal semifinite faithful trace $\tau$ on the von Neumann algebra tensor product $\mathcal{M}_1 \otimes \mathcal{M}_2$ such that
$$
\tau\left(x_1 \otimes x_2\right)=\tau_1\left(x_1\right) \tau_2\left(x_2\right), \quad \forall x_1 \in \mathcal{S}(\M_1), \forall x_2 \in \mathcal{S}(\M_2) .
$$
$\tau$ is called the tensor product of $\tau_1$ and $\tau_2$ and denoted by $\tau_1 \otimes \tau_2$.

\

In this paper, $\M$ will denote a semifinite von Neumann algebra equipped with a normal semifinite faithful trace $\tau$.

\subsection{$d$-adic martingales}\label{sec2.3}
Let $d\ge 2$ be a fixed integer. We are particularly interested in $d$-adic martingales since it is closely related to dyadic martingales on Euclidean spaces. In this section, we give a general definition of $d$-adic martingales. Afterwards we will present an orthonormal basis of Haar wavelets for $d$-adic martingales, which will be used to represent martingale paraproducts.

Let $\Omega$ be a measure space endowed with a $\sigma$-finite measure $\mu$. Assume that in $\Omega$, there exists a family of measurable sets $I_{n,k}$ for $n, k\in \mathbb{Z}$ satisfying the following properties:
\begin{enumerate}
	\item  $I_{n,k}$  are pairwise disjoint for any $k$ if $n$ is fixed;
	\item $\cup_{k\in \mathbb{Z}}I_{n,k}=\Omega$ for every $n$;
	\item $I_{n,k}=\cup_{q=1}^d I_{n+1,kd+q-1}$ for any $n, k$, so each $I_{n,k}$ is a union of $d$ disjoint subsets $I_{n+1,kd+q-1}$;
	\item $\mu(I_{n,k})=d^{-n}$ for any $n, k$.
\end{enumerate}
Then $I_{n,k}$ are called $d$-adic intervals, and let $\mathcal{ D}$ be the family of all such $d$-adic intervals.
Denote by $\mathcal{D}_n$ the collection of $d$-adic intervals of length $d^{-n}$ in $\mathcal{D}$.
For each $n\in \mathbb{Z}$, denote by $\mathcal{F}_n$ the $\sigma$-algebra generated by the $d$-adic intervals $I_{n,k}$, $\forall k\in \mathbb{Z}$. Denote by $\mathcal{F}$ the $\sigma$-algebra generated by all $d$-adic intervals for all $I_{n,k}$, $\forall n,k\in\mathbb{Z}$. 

Then $(\mathcal{F}_n)_{n\in \mathbb{Z}}$ is a filtration associated with the measure space $(\Omega, \mathcal{F}, \mu)$. Denote by $L^{\rm{loc}}_1(\Omega)$ the family of all locally integrable functions $g$ on $\Omega$, that is, $g\in L_1(I_{n, k})$ for all $n, k\in \mathbb{Z}$. For a locally integrable function $g\in L^{\rm{loc}}_1(\Omega)$, the sequence $(g_n)_{n\in\mathbb{Z}}$ is called a $d$-adic martingale, where
$$g_n= \mathbb{E}(g|\mathcal{F}_n)=\sum_{k=-\8}^{\8}\frac{\mathbbm{1}_{I_{n,k}}}{\mu(I_{n,k})}\int_{I_{n,k}}g\ d\mu.  $$
The martingale differences are defined as $d_n g=g_n-g_{n-1}$ for any $n\in\mathbb{Z}$. We also denote $g_n$ by $\mathbb{E}_n(g)$ ($n\in\mathbb{Z}$) as usual.

\begin{definition}\label{haar}
	Let $\omega=e^{\frac{2\pi \mathrm{i}}{d}}$ (here $\mathrm{i} $ is the imaginary number). For any $I=I_{n,k}\in \mathcal{D}$, define
	\[h_I^i=d^{n/2}\sum_{j=0}^{d-1}  \omega^{i{(j+1)}}\mathbbm{1}_{I_{n+1,kd+j}}, \quad \forall \ 1\leq i\leq d-1,\]
	and $h_I^0:=d^{n/2}\mathbbm{1}_I$.
\end{definition}
Then $\{h_I^i\}_{I\in\mathcal{D},1\leq i\leq d-1}$ is an orthonormal basis on $L_2(\Omega)$ because $\forall g\in L_2(\Omega)$
$$  g=\sum_{k=-\8}^{\8} d_kg=\sum_{k=-\8}^{\8} \biggl(\sum_{|I|=d^{-k+1}}\sum_{i=1}^{d-1}h_I^i\langle h_I^i,g\rangle\biggr). $$
We call $\{h_I^i\}_{I\in\mathcal{D},1\leq i\leq d-1}$  the system of Haar wavelets. Note that for any $1\leq i, j\leq d-1$,
\begin{equation}\label{formula2.2}
	h_I^i\cdot h_I^j=\mu(I)^{-1/2} h_I^{\overline{i+j}},
\end{equation}
where $\overline{i+j}$ is the remainder in $[1, d]$ modulo $d$.

\

As in the commutative setting, we can define the semicommutative $d$-adic martingales in the same way. Similarly, denote by $L^{\rm{loc}}_1({\mathbb{R}}, L_1(\M))$ the family of all $f$ such that $\mathbbm{1}_{I_{n, k}}\cdot f \in L_1({\mathbb{R}}, L_1(\M))$ for any $n, k\in\mathbb{Z}$. Then $\forall f\in L_1^{\rm{loc}}(\mathbb{R}, L_1(\M))$, the sequence $(f_n)_{n\in\mathbb{Z}}$ is called a semicommutative $d$-adic martingale, where
\begin{equation}\label{fn}
	f_n= \mathbb{E}(f|\mathcal{F}_n)=\sum_{k=-\8}^{\8}\frac{\mathbbm{1}_{I_{n,k}}}{\mu(I_{n,k})}\int_{I_{n,k}}f\ d\mu.  
\end{equation}
For any $f\in L_1(\mathbb{R},L_1(\mathcal{M}))$ and $g\in L_\8(\mathbb{R})$, define
\[\langle g, f\rangle= \int_{\mathbb{R}}\overline{g} \cdot f \ dm. \]
One can easily deduce that $\la g, f\ra \in L_1(\M)$ from the triangle inequality. By a slight abuse of notation, we use the same notation $\langle$$\cdot$,$\cdot$$\rangle$ to denote the inner product in any given Hilbert space. Besides, by \eqref{fn}, the martingale differences are given by $\forall f\in L_1^{\rm{loc}}(\mathbb{R}, L_1(\M)) \ \text{and} \ n\in \mathbb{Z}$
$$ d_n f=\sum_{|I|=d^{-n+1}}\sum_{i=1}^{d-1}h_I^i\otimes\langle h_I^i,f\rangle. $$

We will utilize $h_I^i$ to give a direct representation of $\pi_b$, which is easier to handle. It is well-known that $L_2(\mathbb{R},L_2(\mathcal{M}))=L_2(\mathbb{R})\otimes L_2(\mathcal{M})$. In the sequel, for any $f\in L_2(\mathbb{R})$ and $x\in L_2(\mathcal{M})$, we use ``$x\cdot f$ ''  (or ``$f\cdot x$ '') to denote $f\otimes x\in L_2(\mathbb{R},L_2(\mathcal{M}))$ for the sake of simplicity.

Now we calculate $\pi_b$. Let $b\in  L_1^{\rm{loc}}(\mathbb{R}, L_1(\M))$. For $f\in L_2(\mathbb{R},L_2(\mathcal{M}))$, one has 
\begin{equation}\label{calcu}
	\begin{aligned}
		\pi_b(f){}&=\sum_{k=-\infty}^{\infty}d_kb\cdot f_{k-1}\\&=\sum_{k=-\infty}^{\infty}\biggl(\sum_{|I|=d^{-k+1}}\sum_{i=1}^{d-1}h_I^i\otimes\langle h_I^i,b\rangle\biggr)\biggl(\sum_{|I|=d^{-k+1}}\mathbbm{1}_I\otimes\Bigl\langle \frac{\mathbbm{1}_I}{|I|},f\Bigr\rangle\biggr)\\&=\sum_{I\in \mathcal{D}}\sum_{i=1}^{d-1}h^i_I\otimes\langle h_I^i,b\rangle \Bigl\langle \frac{\mathbbm{1}_I}{|I|},f\Bigr\rangle,
	\end{aligned}
\end{equation}
which can be rewritten as
\begin{equation}\label{pib}
	\begin{aligned}
		\pi_b(f)=\sum_{I\in \mathcal{D}}\sum_{i=1}^{d-1}h^i_I\langle h_I^i,b\rangle \Bigl\langle \frac{\mathbbm{1}_I}{|I|},f\Bigr\rangle.
	\end{aligned}
\end{equation}
The adjoint operator of $\pi_b$ is given by $\forall f\in L_2(\mathbb{R},L_2(\mathcal{M}))$
\begin{equation}\label{pistar}
	\begin{aligned}
		\pi_b^*(f)&=\sum_{k\in\mathbb{Z}} \mathbb{E}_{k-1}(d_k b^* d_kf) \\
		&=\sum_{I\in \mathcal{D}}\sum_{i=1}^{d-1}\frac{\mathbbm{1}_I}{|I|}\langle h_I^i, b\rangle^* \langle h_I^i,f\rangle\\
		&=\sum_{I\in \mathcal{D}}\sum_{i=1}^{d-1}\frac{\mathbbm{1}_I}{|I|}\langle b,h_I^i\rangle \langle h_I^i,f\rangle. 
	\end{aligned}
\end{equation}
\begin{rem}\label{bb}
	For $d=2$, since $d_k b \cdot d_kf$ is $\mathcal{D}_{k-1}$-measurable, one has $(\pi_{b^*})^*=\varLambda_b$.
\end{rem}

\subsection{Operator-valued $BMO$ spaces}\label{sec2.4}

Let $b$ be an $\mathcal{M}$-valued function that is Bochner integrable on any $d$-adic interval, and define the following operator-valued $BMO$ sapces:
\begin{equation*}
	\begin{aligned}
		BMO_c^d(\mathbb{R},\mathcal{M}){}&=\biggl\{b:\|b\|_{BMO_c^d(\mathbb{R},\mathcal{M})}=\sup_{m\in\mathbb{Z}}\biggl\|\mathbb{E}_m\sum_{k=m}^\infty |d_kb|^2\biggr\|^{1/2}_{\mathcal{M}}<\infty\biggr\};\\
		BMO_r^d(\mathbb{R},\mathcal{M})	&=\biggl\{b:\|b\|_{BMO_r^d(\mathbb{R},\mathcal{M})}=\|b^*\|_{BMO_c^d(\mathbb{R},\mathcal{M})}<\infty\biggr\};\\
		BMO_{cr}^d(\mathbb{R},\mathcal{M})&=BMO_c^d(\mathbb{R},\mathcal{M})\cap	BMO_r^d(\mathbb{R},\mathcal{M}).
	\end{aligned}
\end{equation*}

\begin{rem}
	When $\mathcal{M}=\mathbb{C}$, the above three $BMO$ spaces are exactly the usual martingale $BMO^d(\mathbb{R})$.
\end{rem}
\begin{rem}
	From the martingale John-Nirenberg inequality in \cite{Ga}, we deduce that for $1\le p<\infty$,
	\begin{equation*}
		\|f\|_{BMO^d(\mathbb{R})}\approx_p \sup_{n\in\mathbb{Z}}\big\|\mathbb{E}_n|f-f_{n-1}|^p\big\|_\infty^{1/p}.
	\end{equation*}
\end{rem}

The following lemma is the interpolation result between $BMO$ and noncommutative $L_p$-spaces (see \cite[Theorem 3.11]{Mu}).
\begin{lem}\label{Musat}
	Let $1<p<\infty$ and $0<\theta<1$. Then
	\begin{equation*}
		\bigl(BMO_{cr}^d(\mathbb{R},\mathcal{M}),L_p(L_\infty(\mathbb{R})\otimes \mathcal{M})\bigr)_{p/q}=L_q(L_\infty(\mathbb{R})\otimes \mathcal{M}),\quad \text{with}\,\,q=p/\theta.
	\end{equation*}
\end{lem}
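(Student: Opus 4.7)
The plan is to prove the identity by establishing both inclusions separately: an upper inclusion from an easy embedding, and a matching lower inclusion by dualising the corresponding interpolation identity for noncommutative martingale Hardy spaces.

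First I would observe the continuous embedding $L_\infty(\mathcal{N})\hookrightarrow BMO_{cr}^d(\mathbb{R},\mathcal{M})$ (writing $\mathcal{N}=L_\infty(\mathbb{R})\otimes\mathcal{M}$ for short): for $b\in L_\infty(\mathcal{N})$ and any $m\in\mathbb{Z}$,
\begin{equation*}
\Bigl\|\mathbb{E}_m\sum_{k\geq m}|d_k b|^2\Bigr\|_\infty=\|\mathbb{E}_m|b-b_{m-1}|^2\|_\infty\leq 4\|b\|_\infty^2,
\end{equation*}
and the same bound is valid for $b^*$. Combined with the classical identity $(L_\infty(\mathcal{N}),L_p(\mathcal{N}))_\theta=L_q(\mathcal{N})$ for $q=p/\theta$, functoriality of Calder\'on's complex method yields at once the upper inclusion $L_q(\mathcal{N})\hookrightarrow(BMO_{cr}^d(\mathbb{R},\mathcal{M}),L_p(\mathcal{N}))_\theta$ with the expected norm control.

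For the reverse inclusion I would pass through the noncommutative martingale Hardy spaces. The noncommutative Pisier-Xu duality theorem identifies $BMO_{cr}^d(\mathbb{R},\mathcal{M})$ with the dual of the martingale Hardy space $h_1^{cr}:=h_1^c+h_1^r$, while the Burkholder-Gundy and Burkholder-Rosenthal inequalities give $h_r^{cr}\approx L_r(\mathcal{N})$ for all $1<r<\infty$. The Hardy-space interpolation identity $(h_1^{cr},L_{p'}(\mathcal{N}))_\theta=L_{q'}(\mathcal{N})$, where $1/q'=1-\theta/p$, then serves as the starting point; dualising it through Bergh's theorem on the duality of the complex method converts it into the desired identity $(BMO_{cr}^d(\mathbb{R},\mathcal{M}),L_p(\mathcal{N}))_\theta=L_q(\mathcal{N})$, using that $(L_{q'})^*=L_q$ and that the interpolation parameter $\theta$ is preserved under duality of the complex method.

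The main obstacle is that $BMO_{cr}^d$ is not reflexive, so the standard duality theorem $(X_0,X_1)_\theta^*=(X_0^*,X_1^*)_\theta$ does not apply verbatim. The remedy is to work with Calder\'on's second complex method $(\cdot,\cdot)^\theta$, whose duality holds under substantially weaker hypotheses, together with the noncommutative martingale John-Nirenberg inequality (the remark just before the lemma) to secure the weak-$*$ density of nice martingales in $BMO_{cr}^d$ that is needed to identify the two Calder\'on methods at the relevant parameter. Handling the column and row components jointly, rather than interpolating each separately (which would fail to reconstruct the correct intersection norm), is the technical heart of the argument, and is precisely what Musat carries out in the cited reference.
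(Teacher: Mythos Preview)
The paper does not supply its own proof of this lemma; it simply quotes the result from Musat \cite[Theorem~3.11]{Mu} and uses it as a black box. Your proposal is therefore not in competition with anything in the paper --- rather, you have sketched the strategy that Musat herself employs (embedding $L_\infty\hookrightarrow BMO_{cr}$ for one inclusion, and dualising the Hardy-space interpolation $(h_1,L_{p'})_\theta=L_{q'}$ for the other, with the second complex method to handle the non-reflexivity of $BMO$). As an outline of Musat's argument your sketch is accurate, though of course the actual execution --- in particular the joint treatment of the column and row parts and the identification of the two Calder\'on methods --- is where the real work lies, and for that one must consult \cite{Mu}.
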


We conclude this section by noncommutative martingale Hardy spaces.
\begin{definition}
	For $g\in L_1(L_\infty(\mathbb{R})\otimes \mathcal{M})$, define
	\begin{enumerate}
		\item the $d$-adic martingale square function
		\begin{equation*}
			S(g)=\biggl(\sum_{k\in\mathbb{Z}} |d_kg|^2\biggr)^{1/2};
		\end{equation*}
		\item the $d$-adic martingale conditional square function
		\begin{equation*}
			\begin{aligned}
				s(g)=\biggl(\sum_{k\in\mathbb{Z}}\mathbb{E}_{k-1}\bigl(|d_kg|^2\bigr)\biggr)^{1/2}.
			\end{aligned}
		\end{equation*}
	\end{enumerate}
\end{definition}

\begin{rem}
	Note that $d$-adic martingales are regular. Then $\forall g\in L_1(L_\infty(\mathbb{R})\otimes \mathcal{M})$, $\forall 1\le p<\infty$  
	\begin{equation*}
		\|S(g)\|_{L_p(L_\infty(\mathbb{R})\otimes \mathcal{M})}\approx \|s(g)\|_{L_p(L_\infty(\mathbb{R})\otimes \mathcal{M})}.
	\end{equation*}
\end{rem}
Now we define the $d$-adic martingale column Hardy space $h_{p,c}^d(\mathbb{R})$
\begin{equation}
	\begin{aligned}
		h_{p,c}^d(\mathbb{R})=\bigg\{g\in  L_1(L_\infty(\mathbb{R})\otimes \mathcal{M}):\|g\|_{h_{p,c}^d(\mathbb{R})}=\|s(g)\|_{L_p(L_\infty(\mathbb{R})\otimes \mathcal{M})}<\infty\bigg\}.
	\end{aligned}
\end{equation}
\begin{rem}
	It is known that $(h_{p,c}^d(\mathbb{R}))^*=h_{p',c}^d(\mathbb{R})$ for $ 1<p<\infty$ in \cite{PX97}.
\end{rem}

In \cite{PX97}, Pisier and Xu showed the following noncommutative Burkholder-Gundy inequality:
\begin{thm}\label{BG}
	Let $1<p<\infty$.
	
$\mathrm{(1)}$
		 If $2\le p<\infty$, we have
		\begin{equation*}
			\|f\|_{L_p(L_\infty(\mathbb{R})\otimes \mathcal{M})}\approx_p \max\{\|S(f)\|_{L_p(L_\infty(\mathbb{R})\otimes \mathcal{M})},\|S(f^*)\|_{L_p(L_\infty(\mathbb{R})\otimes \mathcal{M})}\};
		\end{equation*}
	
$\mathrm{(2)}$ If $1< p< 2$, we have
		\begin{equation*}
			\|f\|_{L_p(L_\infty(\mathbb{R})\otimes \mathcal{M})}\approx_p \inf_{f=f_1+f_2}\{\|S(f_1)\|_{L_p(L_\infty(\mathbb{R})\otimes \mathcal{M})}+\|S(f_2^*)\|_{L_p(L_\infty(\mathbb{R})\otimes \mathcal{M})}\}.
		\end{equation*}

\end{thm}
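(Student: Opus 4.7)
The approach is to dispatch $p = 2$ trivially, establish part $(1)$ for $p > 2$ directly, and deduce part $(2)$ for $1 < p < 2$ by duality. For $p = 2$, orthogonality of martingale differences in $L_2(L_\infty(\mathbb{R}) \otimes \mathcal{M})$ gives
\[
\|f\|_2^2 = \sum_k \|d_k f\|_2^2 = \|S(f)\|_2^2,
\]
and swapping $f \leftrightarrow f^*$ yields the row version, so both parts hold at $p = 2$.

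For part $(1)$ with $2 < p < \infty$, I would exploit the telescoping identity
\[
|f_n|^2 - |f_{n-1}|^2 = |d_n f|^2 + f_{n-1}^*(d_n f) + (d_n f)^* f_{n-1}.
\]
Summing over $n$ and taking $L_{p/2}$-norms expresses $\|f\|_p^2 = \||f|^2\|_{p/2}$ as $\|S(f)\|_p^2$ plus cross terms $\sum_n f_{n-1}^*(d_n f)$ and its adjoint. These cross terms are controlled by noncommutative H\"older combined with a Stein-type inequality, yielding an estimate of the shape $\|f\|_p \cdot \max\{\|S(f)\|_p, \|S(f^*)\|_p\}$ that absorbs into the $L_p$-norm via a standard bootstrap; both directions of the claimed equivalence follow.

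For part $(2)$ with $1 < p < 2$, I would use duality. Equipped with the infimum norm $\inf\{\|S(f_1)\|_p + \|S(f_2^*)\|_p : f = f_1 + f_2\}$, the sum space $\mathcal{H}_p^c + \mathcal{H}_p^r$ has dual $\mathcal{H}_{p'}^c \cap \mathcal{H}_{p'}^r$ with the maximum norm, under the canonical trace pairing on $L_\infty(\mathbb{R})\otimes\mathcal{M}$. By part $(1)$ applied at $p' > 2$ this dual is isomorphic to $L_{p'}$, hence $\mathcal{H}_p^c + \mathcal{H}_p^r \cong L_p$ with equivalent norms, as claimed.

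\textbf{Main obstacle.} The heart of the argument is controlling the cross terms $\sum_n f_{n-1}^*(d_n f)$ in $L_{p/2}$: classically these vanish on average by the martingale property, but noncommutativity forces one to treat them as operator-valued martingale transforms. The key input is a noncommutative Stein-type inequality
\[
\Big\|\Big(\sum_k |\mathbb{E}_k a_k|^2\Big)^{1/2}\Big\|_p \lesssim_p \Big\|\Big(\sum_k |a_k|^2\Big)^{1/2}\Big\|_p,
\]
whose proof invokes operator concavity of $x \mapsto x^{1/2}$ on the positive cone and Hilbert-module techniques. A secondary subtlety is ensuring the duality in part $(2)$ is non-degenerate on a common dense subspace, which is standard but benefits from the $d$-adic regularity of the filtration noted in the remark preceding the theorem.
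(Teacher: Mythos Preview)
The paper does not prove this theorem; it is quoted from Pisier--Xu \cite{PX97} as background, so there is no in-paper argument to compare against.

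Your $p=2$ case and the duality reduction for $1<p<2$ are fine. The gap is in the $p>2$ step. Telescoping gives
\[
\|f\|_p^2 \le \|S(f)\|_p^2 + 2\Big\|\sum_n f_{n-1}^*\,d_nf\Big\|_{p/2},
\]
and you claim the cross term is $\lesssim \|f\|_p \cdot \max\{\|S(f)\|_p,\|S(f^*)\|_p\}$ via H\"older plus Stein. But H\"older in the column module gives
\[
\Big\|\sum_n f_{n-1}^*\,d_nf\Big\|_{p/2} \le \Big\|\Big(\sum_n |f_{n-1}|^2\Big)^{1/2}\Big\|_p\,\|S(f)\|_p,
\]
and the first factor diverges for any nonzero $f$. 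The Stein inequality you write down does not rescue this: it bounds $\sum_k |\mathbb{E}_k a_k|^2$ by $\sum_k |a_k|^2$, but here each $a_k$ would be the \emph{same} element $f$, so the right-hand side is still infinite. What the bootstrap actually requires is control of $\sup_n f_{n-1}$, i.e.\ a noncommutative Doob maximal inequality (Junge, 2002), which is strictly deeper than Stein; and even granting that, noncommutativity prevents you from simply extracting the maximal function from the sandwiched sum $\sum_n (d_nf)^* f_{n-1}f_{n-1}^*\,d_nf$. The proof in \cite{PX97} avoids this entirely by a different route: unconditionality of martingale differences in $L_p(\mathcal{M})$ (the UMD property) combined with the noncommutative Khintchine inequalities of Lust-Piquard, which converts $\|f\|_p$ directly into $\max\{\|S(f)\|_p,\|S(f^*)\|_p\}$ without ever seeing the cross terms.
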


By virtue of the vector-valued maximal function, the $d$-adic martingale Hardy space $H^d_{1,\max}(\mathbb{R},\mathcal{M})$ is defined by
\begin{equation*}
	H^d_{1,\max}(\mathbb{R},\mathcal{M})=\bigg\{g\in  L_1(L_\infty(\mathbb{R})\otimes \mathcal{M}):\|g\|_{H^d_{1,\max}(\mathbb{R},\mathcal{M})}:=\bigg\|\sup_{m\in\mathbb{Z}}\|\mathbb{E}_mg\|_{L_1(\mathcal{M})}\bigg\|_{L_1(\mathbb{R})}<\infty\bigg\}.
\end{equation*}
Bourgain and Garcia-Cuerva proved independently that $BMO_\mathcal{M}^d(\mathbb{R})$ embeds continuously into the dual $(	H^d_{1,\max}(\mathbb{R},\mathcal{M}))^*$. We refer the reader to \cite{BJ} for more details.

\bigskip

\section{Proof of Theorem \ref{2}}\label{pthm2}

Our proof follows the pattern set up in \cite{Mei}. We use the interpolation method and Lemma \ref{Musat}. Besides, we consider the general $d$-adic martingales. 
We first give the following lemma which is proved by Mei in \cite[Lemma 3.4]{Mei}. 
\begin{lem}\label{infcr}
	Let $1<p<\infty$. Let $b\in BMO_{r}^d(\mathbb{R},\mathcal{M})$. Assume that $\pi_b$ is bounded on $L_p(L_\infty(\mathbb{R})\otimes \mathcal{M})$. Then
	\begin{equation*}
		\|\pi_b\|_{L_\infty(L_\infty(\mathbb{R})\otimes \mathcal{M})\to BMO_{cr}^d(\mathbb{R},\mathcal{M})}\lesssim_{p} \|\pi_b\|_{L_p(L_\infty(\mathbb{R})\otimes \mathcal{M})\to L_p(L_\infty(\mathbb{R})\otimes \mathcal{M})}+\|b\|_{BMO_{r}^d(\mathbb{R},\mathcal{M})}.
	\end{equation*}
\end{lem}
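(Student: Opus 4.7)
The plan is to bound the column and row seminorms of $\pi_b(f)$ separately for $\|f\|_\infty\leq 1$, since $BMO_{cr}^d = BMO_c^d \cap BMO_r^d$. The row seminorm admits a direct pointwise argument: since $f_{k-1}f_{k-1}^{*}\leq \|f\|_\infty^{2}\mathbbm{1}$, one has $|d_k\pi_b(f)^{*}|^{2}=d_kb\cdot f_{k-1}f_{k-1}^{*}\cdot(d_kb)^{*}\leq \|f\|_\infty^{2}|d_kb^{*}|^{2}$; summing over $k>m$, applying $\mathbb{E}_m$ and taking the supremum over $m$ immediately gives $\|\pi_b(f)\|_{BMO_r^d}\leq \|b\|_{BMO_r^d}\|f\|_\infty$.

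The column seminorm is the main work, because the analogous pointwise argument would require the unavailable $b\in BMO_c^d$. I would fix a $d$-adic interval $I_0$ at level $m$, set $A_{I_0}:=\frac{1}{|I_0|}\int_{I_0}\sum_{k>m}|d_k\pi_b(f)|^{2}\,dm\in\mathcal{M}_{+}$, and use the $L_1$--$L_\infty$ duality $\|A_{I_0}\|_\mathcal{M}=\sup\{\tau(y^{*}A_{I_0}y):y\in L_2(\mathcal{M}),\,\|y\|_2\leq 1\}$. Exploiting the right-$\mathcal{M}$-linearity $\pi_b(gy)=\pi_b(g)y$, trace cyclicity, and the orthogonality of martingale differences, together with the localization $h:=f\mathbbm{1}_{I_0}$ (for which $\pi_b(hy)-\mathbb{E}_m\pi_b(hy)$ is supported on $I_0$ and coincides with $\pi_b(fy)-\mathbb{E}_m\pi_b(fy)$ there), one arrives at the key identity
\begin{equation*}
\tau(y^{*}A_{I_0}y)=\frac{1}{|I_0|}\,\bigl\|\pi_b(hy)-\mathbb{E}_m\pi_b(hy)\bigr\|_{L_2(L_\infty(\mathbb{R})\otimes\mathcal{M})}^{2}.
\end{equation*}
Since $\mathbb{E}_m$ is an $L_2$-contraction and $\|hy\|_{L_2}\leq \|f\|_\infty\|y\|_{L_2}|I_0|^{1/2}$, any $L_2\to L_2$ bound on $\pi_b$ yields $\|A_{I_0}\|_\mathcal{M}\leq \|\pi_b\|_{L_2\to L_2}^{2}\|f\|_\infty^{2}$, hence $\|\pi_b(f)\|_{BMO_c^d}\leq \|\pi_b\|_{L_2\to L_2}\|f\|_\infty$.

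It thus remains to control $\|\pi_b\|_{L_2\to L_2}$ by $\|\pi_b\|_{L_p\to L_p}+\|b\|_{BMO_r^d}$. Using the row estimate already proved, which provides the endpoint $\pi_b:L_\infty\to BMO_r^d$ of norm $\leq\|b\|_{BMO_r^d}$, together with the hypothesis $\pi_b:L_p\to L_p$ and Lemma \ref{Musat} (in its $BMO_r^d$ variant, established by the same proof), interpolation furnishes $\pi_b:L_q\to L_q$ for every $q$ between $p$ and $\infty$. When $p\leq 2$ this directly supplies $\pi_b:L_2\to L_2$ with the desired norm control. When $p>2$, one passes to the adjoint: $\pi_b^{*}$ is bounded on $L_{p'}$ with $p'<2$ by duality, a row-type endpoint estimate for $\pi_b^{*}$ is extracted from its explicit expression \eqref{pistar} using $b\in BMO_r^d$, and the same interpolation then yields $\pi_b^{*}:L_2\to L_2$, equivalently $\pi_b:L_2\to L_2$; the $p$-dependent constant in the statement arises precisely from these interpolation steps. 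The main obstacle I anticipate is the endpoint estimate for $\pi_b^{*}$ in the regime $p>2$: unlike $\pi_b$ itself, $\pi_b^{*}$ is not a paraproduct, so the required bound must be deduced by a direct analysis of the formula $\pi_b^{*}(g)=\sum_k\mathbb{E}_{k-1}(d_kb^{*}d_kg)$, exploiting that $b^{*}\in BMO_c^d$. A standard truncation by finite-trace projections handles the passage from the finite- to the semifinite-trace setting.
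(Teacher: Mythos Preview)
The paper does not supply its own proof of this lemma; it simply cites \cite[Lemma~3.4]{Mei}. So there is no in-paper argument to compare against, and the question is whether your proposal stands on its own.

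Your row-$BMO$ estimate is correct and standard, and the localization identity
\[
\tau(y^{*}A_{I_0}y)=\tfrac{1}{|I_0|}\bigl\|\pi_b(hy)-\mathbb{E}_m\pi_b(hy)\bigr\|_{L_2}^{2}
\]
reducing the column seminorm to the $L_2$ operator norm of $\pi_b$ is also correct.

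The gap is in the final step, where you claim a ``$BMO_r^d$ variant'' of Lemma~\ref{Musat}, i.e.\ that $[L_p,BMO_r^d]_\theta=L_q$ follows ``by the same proof''. This is not justified. From $BMO_{cr}^d\hookrightarrow BMO_r^d$ one obtains only the inclusion $L_q=[L_p,BMO_{cr}^d]_\theta\hookrightarrow[L_p,BMO_r^d]_\theta$, which is the wrong direction for concluding $\pi_b:L_q\to L_q$. The reverse inclusion would require, after passing to preduals, that $L_{q'}\hookrightarrow[L_{p'},H_1^r]_\theta$; but for $p'>2$ the noncommutative Burkholder--Gundy inequalities give $L_{p'}\hookrightarrow H_{p'}^r$ and \emph{not} $H_{p'}^r\hookrightarrow L_{p'}$, so one cannot extract this from the Hardy-space interpolation $[H_{p'}^r,H_1^r]_\theta=H_{q'}^r$. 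Your $p>2$ branch inherits the same difficulty: you yourself flag the required endpoint estimate for $\pi_b^{*}$ as ``the main obstacle'' and provide no concrete mechanism for it. There is also a structural circularity: deducing $\|\pi_b\|_{L_2\to L_2}\lesssim_p\|\pi_b\|_{L_p\to L_p}+\|b\|_{BMO_r^d}$ independently is essentially the content of Theorem~\ref{2}, whose proof in this paper \emph{uses} Lemma~\ref{infcr}. Mei's argument avoids the detour through $L_2$ and bounds the column $BMO$ norm directly from the $L_p$ hypothesis; your reduction is therefore too strong to close with the tools invoked.
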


The following proposition is about the boundedness of symmetric paraproduct $\pi_b+(\pi_{b^*})^*$, which was proved by Mei for $d=2$ in \cite{Mei2}, and  Hong, Liu and Mei for $d=2^n$ $(n\in \mathbb{N})$ in \cite{HM}. We just give a sketch of this.
\begin{proposition}\label{pibsumpibstar}
	Let $1<p<\infty$ and $b\in BMO^d_\mathcal{M}(\mathbb{R})$. Then $\pi_b+(\pi_{b^*})^*$ is bounded on $L_p(L_\infty(\mathbb{R})\otimes \mathcal{M})$ and 
	\begin{equation*}
		\|\pi_b+(\pi_{b^*})^*\|_{L_p(L_\infty(\mathbb{R})\otimes \mathcal{M})\to L_p(L_\infty(\mathbb{R})\otimes \mathcal{M})}\lesssim_{d, p} \|b\|_{BMO^d_\mathcal{M}(\mathbb{R})}.
	\end{equation*}
\end{proposition}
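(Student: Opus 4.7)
The plan is to combine an $L_\infty \to BMO_{cr}^d$ estimate with a direct $L_2 \to L_2$ bound, then apply Lemma~\ref{Musat} to interpolate for $2 \le p < \infty$, and finally dualize for $1 < p < 2$. Duality is clean because the adjoint of $\pi_b + (\pi_{b^*})^*$ is $\pi_{b^*} + (\pi_b)^*$, i.e.\ the same operator with symbol $b$ replaced by $b^*$, and $\|b^*\|_{BMO^d_\mathcal{M}} = \|b\|_{BMO^d_\mathcal{M}}$. The continuous inclusion $BMO^d_\mathcal{M}(\mathbb{R}) \hookrightarrow BMO_{cr}^d(\mathbb{R},\mathcal{M})$ makes the interpolation endpoint compatible with the hypothesis on $b$.

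For the $L_\infty \to BMO_{cr}^d$ step, I would exploit the wavelet product rule \eqref{formula2.2}; in particular the special instance $h_I^i \cdot h_I^{d-i} = |I|^{-1}\mathbbm{1}_I$ rewrites $(\pi_{b^*})^*$ in the closed form
\begin{equation*}
(\pi_{b^*})^*(f) = \sum_{I \in \mathcal{D}} \sum_{i=1}^{d-1} \frac{\mathbbm{1}_I}{|I|}\,\la h_I^i, b\ra\,\la h_I^{d-i}, f\ra.
\end{equation*}
Combined with \eqref{pib}, this makes the martingale differences $d_k\bigl(\pi_b(f)+(\pi_{b^*})^*(f)\bigr)$ explicit at every level, and one bounds $\big\|\mathbb{E}_m \sum_{k \ge m} |d_k g|^2\big\|_\mathcal{M}$ by distributing the $BMO^d_\mathcal{M}$-control of $b$ across the $d$-adic blocks and using Cauchy--Schwarz in the internal index $1 \le i \le d-1$. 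The $BMO_r^d$ bound follows by passing to adjoints.

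For the $L_2 \to L_2$ step, pair against $g \in L_2$ and use the orthogonality identity $\tau\int d_k b^* \cdot X\, dm = \tau\int d_k b^* \cdot d_k X\, dm$ together with the Leibniz rule $d_k(gf^*) = d_k g\cdot f_{k-1}^* + g_{k-1} d_k f^* + d_k g\cdot d_k f^*$ to reach
\begin{equation*}
\la (\pi_b + (\pi_{b^*})^*)f,\, g\ra = \sum_k \tau\int d_k b^* \cdot d_k(gf^*)\,dm - \sum_k \tau\int d_k b^* \cdot d_k g\cdot d_k f^*\,dm.
\end{equation*}
The first sum is a martingale pairing bounded via noncommutative Hardy--BMO duality by $\|b\|_{BMO_c^d}\|gf^*\|_{H^{d}_{1,c}} \lesssim \|b\|_{BMO^d_\mathcal{M}}\|f\|_2\|g\|_2$, using the noncommutative Hölder inequality $\|gf^*\|_{H^{d}_{1,c}} \lesssim \|g\|_{H^c_2}\|f\|_{H^r_2} \lesssim \|g\|_2\|f\|_2$ at the $L_2$ level. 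The triple-increment second sum is controlled by noncommutative Hölder combined with Theorem~\ref{BG}.

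The main obstacle is the $L_\infty \to BMO_{cr}^d$ endpoint. For $d=2$ it collapses to boundedness of $\Theta_b = \pi_b + \Lambda_b$, for which \cite{Mei2} provides a direct argument exploiting that $\Lambda_b = (\pi_{b^*})^*$ in the dyadic case (Remark~\ref{bb}). For $d \ge 3$ the operator $(\pi_{b^*})^*$ is strictly larger than $\Lambda_b$, because $h_I^i h_I^{d-i}$ contributes whenever $i \ne d-i$ modulo $d$, and one must carefully manage these additional diagonal contributions together with the column structure of $\pi_b$ without losing more than a $d$-dependent constant—this is what extends Mei's dyadic argument to general $d$-adic martingales.
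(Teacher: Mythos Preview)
The central gap is at the $L_\infty \to BMO_{cr}^d$ endpoint. You assert that the martingale differences $d_k\bigl(\pi_b(f)+(\pi_{b^*})^*(f)\bigr)$ become ``explicit'' via the Haar basis and can then be controlled by Cauchy--Schwarz in the internal index, but this misses the real obstruction. While $d_k(\pi_b f)=d_kb\cdot f_{k-1}$ is local, $(\pi_{b^*})^*(f)=\sum_j\mathbb{E}_{j-1}(d_jb\cdot d_jf)$ is a predictable process whose $k$-th martingale difference picks up contributions from \emph{all} scales $j\ge k+1$. More to the point, the column-$BMO$ square arising from $\pi_b$ alone, namely $\mathbb{E}_m\sum_{k\ge m}f_{k-1}^*|d_kb|^2f_{k-1}$, cannot be bounded by $\|f\|_\infty^2\|b\|_{BMO_c^d}^2$ once $f$ is operator-valued---this is precisely Mei's obstruction to boundedness of $\pi_b$---so any direct $BMO_{cr}^d$ estimate must exploit a specific cancellation with $(\pi_{b^*})^*$, and you have not indicated how that cancellation manifests at the level of square functions. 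Your reference to \cite{Mei2} for the $d=2$ case does not rescue this: Mei's argument there is not an $L_\infty\to BMO_{cr}^d$ bound for $\Theta_b$ but exactly the $H^d_{1,\max}$ duality route. (A smaller issue: the claim $\|gf^*\|_{H^d_{1,c}}\lesssim\|g\|_2\|f\|_2$ in your $L_2$ step is also not standard; products of noncommutative $L_2$ elements need not lie in the column Hardy space alone.)

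The paper avoids interpolation entirely and works directly for every $1<p<\infty$. One pairs against $g\in L_{p'}$, rewrites
\[
\langle(\pi_b+(\pi_{b^*})^*)(f),g\rangle=\Bigl\langle b,\ \sum_k d_kg\cdot f_{k-1}^*+\sum_k g_{k-1}\cdot d_kf^*\Bigr\rangle,
\]
and invokes the embedding $BMO^d_\mathcal{M}(\mathbb{R})\hookrightarrow (H^d_{1,\max}(\mathbb{R},\mathcal{M}))^*$ from Subsection~\ref{sec2.4}. The telescoping identity $\sum_{k\le m}\bigl(d_kg\cdot f_{k-1}^*+g_{k-1}\cdot d_kf^*\bigr)=g_mf_m^*-\sum_{k\le m}d_kg\cdot d_kf^*$ then reduces the $H^d_{1,\max}$ norm to two pieces handled, following \cite{Mei2}, by the vector-valued Doob maximal inequality and Theorem~\ref{BG}. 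The cancellation between $\pi_b$ and $(\pi_{b^*})^*$ is thus packaged into the passage from the bilinear paraproduct form to the single product $g_mf_m^*$, which is exactly what your endpoint sketch lacks.
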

\begin{proof}
	By \eqref{pistar}, we get for any $f\in L_p(L_\infty(\mathbb{R})\otimes \mathcal{M})$ and $g\in L_{p'}(L_\infty(\mathbb{R})\otimes \mathcal{M})$, 
	\begin{equation*}
		\begin{aligned}
			\langle (\pi_b+(\pi_{b^*})^*)(f),g\rangle{}&=\biggl\langle \sum_{k\in\mathbb{Z}}d_kb\cdot f_{k-1}+\sum_{k\in\mathbb{Z}}\mathbb{E}_{k-1}(d_kb\cdot d_kf),g \biggr\rangle\\
			&=\sum_{k\in\mathbb{Z}}\langle d_kb,d_kg\cdot f_{k-1}^*\rangle+\sum_{k\in\mathbb{Z}}\langle d_kb,g_{k-1}\cdot d_kf^*\rangle\\
			&=\biggl\langle b,\sum_{k\in\mathbb{Z}}d_kg\cdot f_{k-1}^*+\sum_{k\in\mathbb{Z}}g_{k-1}\cdot d_kf^*\biggr\rangle.
		\end{aligned}
	\end{equation*}
	By direct calculations, one has
	\begin{equation*}
		\begin{aligned}
			\bigg\|\sum_{k\in\mathbb{Z}}d_kg\cdot f_{k-1}^*+\sum_{k\in\mathbb{Z}}g_{k-1}\cdot d_kf^*\bigg\|_{H^d_{1,\max}(\mathbb{R},\mathcal{M})}{}&=\bigg\|\sup_{m\in\mathbb{Z}}\Big\|\sum_{k\le m}d_kg\cdot f_{k-1}^*+\sum_{k\le m}g_{k-1}\cdot d_kf^*\Big\|_{L_1(\mathcal{M})}\bigg\|_{L_1(\mathbb{R})}\\
			&=\bigg\|\sup_{m\in\mathbb{Z}}\Big\|g_mf^*_{m}-\sum_{k\le m}d_kg\cdot d_kf^*\Big\|_{L_1(\mathcal{M})}\bigg\|_{L_1(\mathbb{R})}.
		\end{aligned}
	\end{equation*}
	Using the same method as in the proof of \cite[Theorem 1.1]{Mei2}, we deduce 
	\begin{equation*}
		\begin{aligned}
			\bigg\|\sum_{k\in\mathbb{Z}}d_kg\cdot f_{k-1}^*+\sum_{k\in\mathbb{Z}}g_{k-1}\cdot d_kf^*\bigg\|_{H^d_{1,\max}(\mathbb{R},\mathcal{M})}{}&
			\lesssim_{d, p} \|f\|_{L_p(L_\infty(\mathbb{R})\otimes \mathcal{M})}\|g\|_{L_{p'}(L_\infty(\mathbb{R})\otimes \mathcal{M})}.
		\end{aligned}
	\end{equation*}
	This implies 
	\begin{equation*}
		\begin{aligned}
			\big|\langle (\pi_b+(\pi_{b^*})^*)(f),g\rangle\big|\lesssim_{d, p} \|b\|_{BMO^d_\mathcal{M}(\mathbb{R})}\|f\|_{L_p(L_\infty(\mathbb{R})\otimes \mathcal{M})}\|g\|_{L_{p'}(L_\infty(\mathbb{R})\otimes \mathcal{M})}.
		\end{aligned}
	\end{equation*}
	Therefore
	\begin{equation*}
		\begin{aligned}
			\|\pi_b+(\pi_{b^*})^*\|_{L_p(L_\infty(\mathbb{R})\otimes \mathcal{M})\to L_p(L_\infty(\mathbb{R})\otimes \mathcal{M})}\lesssim_{d, p} \|b\|_{BMO^d_\mathcal{M}(\mathbb{R})}.
		\end{aligned}
	\end{equation*}
\end{proof}

\begin{rem}
	When $d=2$ and $p=2$, Proposition \ref{pibsumpibstar} is first shown in \cite{BP08}.
\end{rem}

With the help of Lemma \ref{infcr} and Proposition \ref{pibsumpibstar}, we now prove Theorem \ref{2}.
\begin{proof}[Proof of Theorem \ref{2}]
	By assumption, $\exists 1<p_0<\infty$, such that $\pi_b$ is bounded on $L_{p_0}(L_\infty(\mathbb{R})\otimes \mathcal{M})$.
	Note that $\|b\|_{BMO_{cr}^d(\mathbb{R},\mathcal{M})}\le \|b\|_{BMO^d_{\mathcal{M}}(\mathbb{R})}$. From Lemma \ref{infcr}, one has
	\begin{equation*}
		\begin{aligned}
			\|\pi_b\|_{L_\infty(L_\infty(\mathbb{R})\otimes \mathcal{M})\to BMO_{cr}^d(\mathbb{R},\mathcal{M})}{}&\lesssim_{p_0} \|\pi_b\|_{L_{p_0}(L_\infty(\mathbb{R})\otimes \mathcal{M})\to L_{p_0}(L_\infty(\mathbb{R})\otimes \mathcal{M})}+\|b\|_{BMO^d_{\mathcal{M}}(\mathbb{R})}<\infty.
		\end{aligned}
	\end{equation*}
	Thus due to Lemma \ref{Musat}, for any $p_0<p<\infty$ one has
	\begin{equation*}
		\begin{aligned}
			\|\pi_b\|_{L_p(L_\infty(\mathbb{R})\otimes \mathcal{M})\to L_p(L_\infty(\mathbb{R})\otimes \mathcal{M})}<\infty.
		\end{aligned}
	\end{equation*}
	Then by Proposition \ref{pibsumpibstar}, for any $1<p<p_0'$
	\begin{equation*}
		\begin{aligned}
			{}&\quad\|\pi_{b^*}\|_{L_p(L_\infty(\mathbb{R})\otimes \mathcal{M})\to L_p(L_\infty(\mathbb{R})\otimes \mathcal{M})}\\
			&=\|(\pi_{b^*})^*\|_{L_{p'}(L_\infty(\mathbb{R})\otimes \mathcal{M})\to L_{p'}(L_\infty(\mathbb{R})\otimes \mathcal{M})}\\
			&\le \|\pi_b+(\pi_{b^*})^*\|_{L_{p'}(L_\infty(\mathbb{R})\otimes \mathcal{M})\to L_{p'}(L_\infty(\mathbb{R})\otimes \mathcal{M})}+\|\pi_{b}\|_{L_{p'}(L_\infty(\mathbb{R})\otimes \mathcal{M})\to L_{p'}(L_\infty(\mathbb{R})\otimes \mathcal{M})}\\
			&\lesssim_{d, p}\|b\|_{BMO^d_{\mathcal{M}}(\mathbb{R})}+\|\pi_{b}\|_{L_{p'}(L_\infty(\mathbb{R})\otimes \mathcal{M})\to L_{p'}(L_\infty(\mathbb{R})\otimes \mathcal{M})}<\infty.
		\end{aligned}
	\end{equation*}
	Fix $1<p_1<p_0'$, then
	\begin{equation*}
		\begin{aligned}
			\|\pi_{b^*}\|_{L_{p_1}(L_\infty(\mathbb{R})\otimes \mathcal{M})\to L_{p_1}(L_\infty(\mathbb{R})\otimes \mathcal{M})}<\infty.
		\end{aligned}
	\end{equation*}
	Repeating the above argument with $b$ replaced by $b^*$ and $p_0$  by $p_1$, we deduce that $\pi_{b^*}$ is bounded on $L_{p'}(L_\infty(\mathbb{R})\otimes \mathcal{M})$ for any $p'>p_1$. Then passing to adjoint, we see that for any $1<p<p'_1$
	\begin{equation*}
		\begin{aligned}
			\|\pi_b\|_{L_p(L_\infty(\mathbb{R})\otimes \mathcal{M})\to L_p(L_\infty(\mathbb{R})\otimes \mathcal{M})}<\infty.
		\end{aligned}
	\end{equation*}
	Note that $p_1'>p_0$. Combining the above obtained results, we obtain the desired announcement: for any $1<p<\infty$
	\begin{equation*}
		\begin{aligned}
			\|\pi_b\|_{L_p(L_\infty(\mathbb{R})\otimes \mathcal{M})\to L_p(L_\infty(\mathbb{R})\otimes \mathcal{M})}<\infty.
		\end{aligned}
	\end{equation*}
\end{proof}
\begin{rem}
	If $b\in BMO^d(\mathbb{R})$, then $\pi_{b}$ is bounded in $L_2(L_\infty(\mathbb{R})\otimes \mathcal{M})$. Hence by Theorem \ref{2}, $\pi_{b}$ is bounded in $L_p(L_\infty(\mathbb{R})\otimes \mathcal{M})$ for any $1<p<\infty$.
\end{rem}

\bigskip

\section{Proof of Theorem \ref{1}}\label{pthm1}

 Now we concentrate on the boundedness of $\Theta_b$ which has been defined in \eqref{theta}. Hong, Liu and Mei have given an approach of Theorem \ref{1} but their proof seems to contain a gap. Thus Theorem \ref{1} and its proof have fixed this gap. Owing to Proposition \ref{pibsumpibstar}, it remains to discuss the boundedness of $\varLambda_b-(\pi_{b^*})^*$. The following lemma is elementary and will be needed in the proof of Proposition \ref{lambdabpib}.
\begin{lemma}\label{aibi}
	Assume that $(a_{I,i})_{I\in \mathcal{D}, 1\le i\le d-1}$ and $(b_{J,i})_{J\in \mathcal{D}, 1\le i\le d-1}$ are two sequences of operators in $\mathcal{M}$ indexed by the dyadic intervals. Then we have
	\begin{equation*}
		\sum_{I\in\mathcal{D}}\bigg|\sum_{i=1}^{d-1} a_{I,i}b_{I,i}\bigg|^2\le \Bigl(\sup_{I\in \mathcal{D}}\sum_{i=1}^{d-1}\|a_{I,i}\|^2_{\mathcal{M}}\Bigr)\cdot\sum_{J\in\mathcal{D}}\sum_{j=1}^{d-1} b_{J,j}^*b_{J,j}.
	\end{equation*}
\end{lemma}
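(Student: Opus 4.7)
The plan is to reduce to a single-index operator Cauchy--Schwarz inequality and then sum over $I \in \mathcal{D}$. Fix $I \in \mathcal{D}$ and view $A_I = (a_{I,1}, \ldots, a_{I,d-1}) \in M_{1,d-1}(\mathcal{M})$ as a row vector and $B_I = (b_{I,1}, \ldots, b_{I,d-1})^{T} \in M_{d-1,1}(\mathcal{M})$ as a column vector with entries in $\mathcal{M}$. Then
\begin{equation*}
\sum_{i=1}^{d-1} a_{I,i} b_{I,i} = A_I B_I,
\end{equation*}
so that $\bigl|\sum_i a_{I,i} b_{I,i}\bigr|^{2} = B_I^{*} A_I^{*} A_I B_I$.

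Next I would apply the standard operator inequality $A_I^{*} A_I \leq \|A_I^{*} A_I\|\, \mathbf{1}$ in $M_{d-1}(\mathcal{M})$, where $\mathbf{1}$ denotes the identity. Sandwiching this between $B_I^{*}$ and $B_I$ preserves the inequality, giving
\begin{equation*}
B_I^{*} A_I^{*} A_I B_I \;\leq\; \|A_I\|^{2}\, B_I^{*} B_I \;=\; \|A_I\|^{2} \sum_{i=1}^{d-1} b_{I,i}^{*} b_{I,i}.
\end{equation*}
Since $\|A_I\|^{2} = \|A_I A_I^{*}\|_{\mathcal{M}} = \bigl\|\sum_{i} a_{I,i} a_{I,i}^{*}\bigr\|_{\mathcal{M}} \leq \sum_{i} \|a_{I,i}\|_{\mathcal{M}}^{2}$ by the triangle inequality in $\mathcal{M}$, we arrive at
\begin{equation*}
\biggl|\sum_{i=1}^{d-1} a_{I,i} b_{I,i}\biggr|^{2} \;\leq\; \biggl(\sum_{i=1}^{d-1} \|a_{I,i}\|_{\mathcal{M}}^{2}\biggr) \sum_{i=1}^{d-1} b_{I,i}^{*} b_{I,i}.
\end{equation*}

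Finally, summing this pointwise-positive inequality over $I \in \mathcal{D}$ and bounding each scalar factor $\sum_{i} \|a_{I,i}\|_{\mathcal{M}}^{2}$ by its supremum over $I$ yields the desired conclusion. There is essentially no obstacle here: the only subtle point is the first step, namely recognizing that the natural object to estimate is the row-column pairing $A_I B_I$ so that operator Cauchy--Schwarz applies cleanly in the $M_{1,d-1}(\mathcal{M})$ framework, rather than trying to expand $\bigl|\sum_i a_{I,i} b_{I,i}\bigr|^{2} = \sum_{i,j} b_{I,j}^{*} a_{I,j}^{*} a_{I,i} b_{I,i}$ and juggle cross-terms directly.
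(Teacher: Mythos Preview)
Your proof is correct and follows essentially the same approach as the paper: both arguments package the sum $\sum_i a_{I,i}b_{I,i}$ as a row-column product $A_I B_I$ in matrices over $\mathcal{M}$, apply the operator inequality $B_I^*A_I^*A_IB_I \le \|A_I\|^2 B_I^*B_I$, bound $\|A_I\|^2 = \|A_IA_I^*\|_{\mathcal{M}} \le \sum_i \|a_{I,i}\|_{\mathcal{M}}^2$, and then sum over $I$. The only cosmetic difference is that the paper writes the row and column vectors using matrix units $e_{i,j}$ in $\mathbb{M}_{d-1}\otimes\mathcal{M}$, while you use the equivalent $M_{1,d-1}(\mathcal{M})$ and $M_{d-1,1}(\mathcal{M})$ notation.
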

\begin{proof}
Denote by $e_{i,j}$ the $(d-1)\times (d-1)$ matrix which has a 1 in the $(i,j)$-th position as its only nonzero entry. Then it is equivalent to prove
	\begin{equation*}
	\sum_{I\in\mathcal{D}}\bigg|\sum_{i=1}^{d-1} a_{I,i}b_{I,i}\otimes e_{1,1}\bigg|^2\le \Big(\sup_{I\in \mathcal{D}}\sum_{i=1}^{d-1}\|a_{I,i}\|^2_{\mathcal{M}}\Big)\cdot\sum_{J\in\mathcal{D}}\sum_{j=1}^{d-1} b_{J,j}^*b_{J,j}\otimes e_{1,1}.
\end{equation*}
Let $A_I=\sum\limits_{j=1}^{d-1} a_{I,j}\otimes e_{1,j}$ and $B_I=\sum\limits_{j=1}^{d-1} b_{I,j}\otimes e_{j,1}$. Then
	\begin{equation*}
		\begin{aligned}
			\sum_{I\in\mathcal{D}}\bigg|\sum_{i=1}^{d-1} a_{I,i}b_{I,i}\otimes e_{1,1}\bigg|^2{}&=\sum_{I\in\mathcal{D}}|A_IB_I|^2
			=\sum_{I\in\mathcal{D}}B_I^*A_I^*A_IB_I.
		\end{aligned}
	\end{equation*}
Note for any $I\in\mathcal{D}$, 
$$B_I^*A_I^*A_IB_I \le B_I^*B_I\|A_I^*A_I\|_{L_\infty(\mathbb{M}_{d-1}\otimes \mathcal{M})}.$$
Hence, we deduce that
\begin{equation*}
	\begin{aligned}
		\sum_{I\in\mathcal{D}}\bigg|\sum_{i=1}^{d-1} a_{I,i}b_{I,i}\otimes e_{1,1}\bigg|^2{}&\le \Big(\sup_{I\in \mathcal{D}}\|A_I^*A_I\|_{L_\infty(\mathbb{M}_{d-1}\otimes \mathcal{M})}\Bigr)\cdot\sum_{J\in\mathcal{D}}B_J^*B_J\\
		&=\Big(\sup_{I\in \mathcal{D}}\|A_IA_I^*\|_{L_\infty(\mathbb{M}_{d-1}\otimes \mathcal{M})}\Bigr)\cdot\sum_{J\in\mathcal{D}}B_J^*B_J \\
		&\le \Big(\sup_{I\in \mathcal{D}}\sum_{i=1}^{d-1}\|a_{I,i}\|^2_{\mathcal{M}}\Bigr)\cdot\sum_{J\in\mathcal{D}}\sum_{j=1}^{d-1} b_{J,j}^*b_{J,j}\otimes e_{1,1},
	\end{aligned}
\end{equation*}
as desired.
\end{proof}

By the above lemma, we can deal with the term $\varLambda_b-(\pi_{b^*})^*$.  In particular, when $d=2$, one has $\varLambda_b-(\pi_{b^*})^*=0$ (see Remark \ref{bb}).
\begin{proposition}\label{lambdabpib}
	Let $1<p<\infty$ and $b\in BMO^d_\mathcal{M}(\mathbb{R})$.
	
		$\mathrm{(1)}$ If $2\le p< \infty$, then $\varLambda_b-(\pi_{b^*})^*$ is bounded from $L_p(L_\infty(\mathbb{R})\otimes \mathcal{M})$ to $h_{p,c}^d(\mathbb{R})$ and
		\begin{equation*}
			\|\varLambda_b-(\pi_{b^*})^*\|_{L_p(L_\infty(\mathbb{R})\otimes \mathcal{M})\to h_{p,c}^d(\mathbb{R})}\lesssim_{d,p} \|b\|_{BMO^d_\mathcal{M}(\mathbb{R})};
		\end{equation*}
	
		$\mathrm{(2)}$ If $1< p\le 2$, then $\varLambda_b-(\pi_{b^*})^*$ is bounded from $h_{p,c}^d(\mathbb{R})$ to $L_p(L_\infty(\mathbb{R})\otimes \mathcal{M})$ and 
		\begin{equation*}
			\|\varLambda_b-(\pi_{b^*})^*\|_{h_{p,c}^d(\mathbb{R})\to L_p(L_\infty(\mathbb{R})\otimes \mathcal{M})}\lesssim_{d,p} \|b\|_{BMO^d_\mathcal{M}(\mathbb{R})}.
		\end{equation*}
\end{proposition}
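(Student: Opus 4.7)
The plan is to reduce both parts of the proposition to a single pointwise operator-valued inequality relating the conditional square functions $s(T_b f)$ and $s(f)$, where $T_b := \varLambda_b - (\pi_{b^*})^*$. Using the definition of $\varLambda_b$ and formula (\ref{pistar}) for $(\pi_{b^*})^*$, the $k$-th martingale difference of $T_b f$ equals
\begin{equation*}
d_k(T_b f) = d_k b \cdot d_k f - \mathbb{E}_{k-1}(d_k b \cdot d_k f).
\end{equation*}
When $d=2$ this vanishes by Remark \ref{bb}, so the estimates below are only needed for $d \ge 3$.

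The crucial ingredient is the uniform bound
\begin{equation*}
\|d_k b\|_{L_\infty(J,\mathcal{M})} \le d^{1/2}\,\|b\|_{BMO^d_\mathcal{M}(\mathbb{R})}, \qquad \forall\, k\in\mathbb{Z},\ \forall\, J \in \mathcal{D}_{k-1}.
\end{equation*}
Indeed, on any child $I'$ of $J$ one has $(d_k b)|_{I'} = |I'|^{-1}\int_{I'}(b-b_J)\,dm$, and Cauchy-Schwarz combined with the definition (\ref{BMOMd}) of the $BMO^d_\mathcal{M}$-norm yields $\|b_{I'}-b_J\|_\mathcal{M} \le (|J|/|I'|)^{1/2}\|b\|_{BMO^d_\mathcal{M}(\mathbb{R})} = d^{1/2}\|b\|_{BMO^d_\mathcal{M}(\mathbb{R})}$. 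Combining this with the noncommutative Pythagoras identity $\mathbb{E}_{k-1}|Y-\mathbb{E}_{k-1}Y|^2 = \mathbb{E}_{k-1}|Y|^2 - |\mathbb{E}_{k-1}Y|^2$ applied to $Y = d_k b\cdot d_k f$, and using the pointwise operator inequality $(d_k b)^*(d_k b) \le \|d_k b\|_{L_\infty(J,\mathcal{M})}^2\,\mathbbm{1}_\mathcal{M}$ on each $J\in\mathcal{D}_{k-1}$, I obtain
\begin{equation*}
\mathbb{E}_{k-1}|d_k(T_b f)|^2 \le \mathbb{E}_{k-1}|d_k b\cdot d_k f|^2 \le d\,\|b\|_{BMO^d_\mathcal{M}(\mathbb{R})}^2\,\mathbb{E}_{k-1}|d_k f|^2.
\end{equation*}
Summing over $k$ and invoking operator monotonicity of the square root gives the pointwise inequality $s(T_b f) \le d^{1/2}\,\|b\|_{BMO^d_\mathcal{M}(\mathbb{R})}\,s(f)$.

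Both parts of the proposition then follow quickly, via Theorem \ref{BG} and the regularity of $d$-adic martingales. For Part (1), taking the $L_p$-norm gives $\|T_b f\|_{h_{p,c}^d(\mathbb{R})} = \|s(T_b f)\|_{L_p} \le d^{1/2}\|b\|_{BMO^d_\mathcal{M}(\mathbb{R})}\|s(f)\|_{L_p}$, and for $p \ge 2$ the last factor is $\lesssim_p \|f\|_{L_p}$ by Burkholder-Gundy. For Part (2), applying Theorem \ref{BG}(2) to $T_b f$ with the trivial decomposition $T_b f = T_b f + 0$ yields $\|T_b f\|_{L_p} \lesssim_p \|S(T_b f)\|_{L_p} \approx \|s(T_b f)\|_{L_p}$, which is in turn bounded by $d^{1/2}\|b\|_{BMO^d_\mathcal{M}(\mathbb{R})}\|f\|_{h_{p,c}^d(\mathbb{R})}$.

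The main subtlety is the correct operator interpretation of the Pythagoras identity in the noncommutative setting, which relies on $\mathbb{E}_{k-1}$ commuting with left and right multiplication by $\mathcal{F}_{k-1}$-measurable operator-valued functions. Lemma \ref{aibi} does not seem to be required for this route; one would invoke it if one preferred to expand $\sum_k |d_k(T_b f)|^2$ directly in the Haar basis and bound the resulting bilinear sum, but the pointwise control of $s(T_b f)$ above appears to suffice for both parts of the proposition.
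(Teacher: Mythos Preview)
Your argument is correct and, in fact, cleaner than the paper's. Both proofs arrive at the same pointwise operator inequality $s(T_b f)\lesssim_d \|b\|_{BMO^d_\mathcal{M}(\mathbb{R})}\,s(f)$, but by different means. The paper expands $d_k(d_kb\cdot d_kf)$ explicitly in the Haar system $\{h_I^i\}$, computes $\mathbb{E}_{k-1}|d_k(d_kb\cdot d_kf)|^2$ as a sum over $I\in\mathcal{D}_{k-1}$ of terms $\bigl|\sum_i \langle h_I^i,b\rangle\langle h_I^{\overline{l-i}},f\rangle\bigr|^2$, and then invokes Lemma~\ref{aibi} (a small matrix inequality) to separate the $b$- and $f$-contributions. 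You bypass the Haar expansion entirely by observing that $\|d_kb\|_{L_\infty(\mathbb{R},\mathcal{M})}\le d^{1/2}\|b\|_{BMO^d_\mathcal{M}(\mathbb{R})}$ and using the conditional Pythagoras identity, which is a more elementary route and makes the role of the $BMO^d_\mathcal{M}$ norm transparent. For Part~(2) the paper passes to the adjoint $(\varLambda_b-(\pi_{b^*})^*)^*(g)=\sum_k d_k(d_kb^*\cdot d_kg)$ and uses the duality $(h_{p,c}^d)^*=h_{p',c}^d$, whereas you handle it directly from the same pointwise square-function bound via the easy half of Burkholder--Gundy. The paper's Haar computation has the advantage of being reused elsewhere in the article, but for this proposition your approach is shorter and avoids Lemma~\ref{aibi} altogether.
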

\begin{proof}
(1) We aim to show 
\begin{equation*}
	\|(\varLambda_b-(\pi_{b^*})^*)(f)\|_{h_{p,c}^d(\mathbb{R})}\lesssim_{d,p} \|b\|_{BMO^d_\mathcal{M}(\mathbb{R})}\|f\|_{L_p(L_\infty(\mathbb{R})\otimes \mathcal{M})}.
\end{equation*}
From \eqref{rab} and \eqref{pistar} one has
\begin{equation*}
	\begin{aligned}
		(\varLambda_b-(\pi_{b^*})^*)(f)=\sum_{k\in\mathbb{Z}}d_kb\cdot d_kf-\sum_{k\in\mathbb{Z}}\mathbb{E}_{k-1}(d_kb\cdot d_kf)=\sum_{k\in\mathbb{Z}}d_{k}(d_kb\cdot d_kf).
	\end{aligned}
\end{equation*}
A direct calculation leads to
\begin{equation*}
	\begin{aligned}
		d_k(d_kb\cdot d_kf)=\sum_{I\in \mathcal{D}_{k-1}}\sum_{l=1}^{d-1}\sum_{\overline{i+j}=l}\langle h_I^i,b\rangle\langle h_I^j,f\rangle \frac{h_I^l}{|I|^{1/2}}.
	\end{aligned}
\end{equation*}
This implies 
\begin{equation*}
	\begin{aligned}
		|d_k(d_kb\cdot d_kf)|^2=\sum_{I\in \mathcal{D}_{k-1}}\Big|\sum_{l=1}^{d-1}\sum_{\overline{i+j}=l}\langle h_I^i,b\rangle\langle h_I^j,f\rangle \frac{h_I^l}{|I|^{1/2}}\Big|^2.
	\end{aligned}
\end{equation*}
Then we get
\begin{equation*}
	\begin{aligned}
		{}&\quad\mathbb{E}_{k-1}\big(|d_k(d_kb\cdot d_kf)|^2\big)\\
		&=\sum_{I\in \mathcal{ D}_{k-1}}\biggl\langle \frac{\mathbbm{1}_I}{|I|},\Big|\sum_{l=1}^{d-1}\sum_{\overline{i+j}=l}\langle h_I^i,b\rangle\langle h_I^j,f\rangle \frac{h_I^l}{|I|^{1/2}}\Big|^2\bigg\rangle \cdot\mathbbm{1}_I\\
		&=\sum_{I\in \mathcal{ D}_{k-1}}\biggl\langle \sum_{l=1}^{d-1}\sum_{\overline{i+j}=l}\langle h_I^i,b\rangle\langle h_I^j,f\rangle \frac{h_I^l}{|I|^{1/2}},\sum_{l=1}^{d-1}\sum_{\overline{i+j}=l}\langle h_I^i,b\rangle\langle h_I^j,f\rangle \frac{h_I^l}{|I|^{1/2}}\bigg\rangle_{L_2(\mathbb{R})} \cdot\frac{\mathbbm{1}_I}{|I|}\\
		&=\sum_{I\in\mathcal{D}_{k-1}}\sum_{l=1}^{d-1}\bigg|\sum_{\overline{i+j}=l}\langle h_I^i,b\rangle\langle h_I^j,f\rangle\cdot \frac{\mathbbm{1}_I}{|I|}\bigg|^2.
	\end{aligned}
\end{equation*}
Hence we have
\begin{equation*}
	\begin{aligned}
		\big[s\big((\varLambda_b-(\pi_{b^*})^*)(f)\big)\big]^2{}&=\sum_{k\in\mathbb{Z}}\mathbb{E}_{k-1}\big(|d_k(d_kb\cdot d_kf)|^2\big)
		=\sum_{I\in\mathcal{D}}\sum_{l=1}^{d-1}\Big|\sum_{i=1}^{d-1}\langle h_I^i,b\rangle\langle h_I^{\overline{l-i}},f\rangle\cdot \frac{\mathbbm{1}_I}{|I|}\Big|^2.
	\end{aligned}
\end{equation*}
Applying Lemma \ref{aibi} with $a_{I,i}=\frac{\langle h_I^i,b\rangle}{|I|^{1/2}}$ and $b_{I,i}=\frac{\langle h_I^{\overline{l-i}},f\rangle}{|I|^{1/2}}$, $\forall 1\le i\le d-1$, we get
\begin{equation*}
	\begin{aligned}
		s\big((\varLambda_b-(\pi_{b^*})^*)(f)\big)^2{}&
		\le (d-1) \sup_{I\in \mathcal{D}}\frac{1}{|I|}\sum_{i=1}^{d-1}\|\langle h_I^i,b\rangle\|^2_\mathcal{M}\cdot\sum_{J\in\mathcal{D}}\sum_{j=1}^{d-1}|\langle h_J^j,f\rangle|^2\frac{\mathbbm{1}_J}{|J|}\\
		&=(d-1) \sup_{I\in \mathcal{D}}\frac{1}{|I|}\sum_{i=1}^{d-1}\|\langle h_I^i,b\rangle\|^2_\mathcal{M}\cdot s(f)^2.
	\end{aligned}
\end{equation*}
Thus one has
\begin{equation*}
	\begin{aligned}
		\|s\big((\varLambda_b-(\pi_{b^*})^*)(f)\big)\|_{L_p(L_\infty(\mathbb{R})\otimes \mathcal{M})}{}&
		\lesssim_{d} \biggl(\sup_{I\in \mathcal{D}}\frac{1}{|I|^{1/2}}\sum_{i=1}^{d-1}\|\langle h_I^i,b\rangle\|_\mathcal{M}\biggr)\cdot \|s(f)\|_{L_p(L_\infty(\mathbb{R})\otimes \mathcal{M})}\\
		&\lesssim_p \biggl(\sup_{I\in \mathcal{D}}\frac{1}{|I|^{1/2}}\sum_{i=1}^{d-1}\|\langle h_I^i,b\rangle\|_\mathcal{M}\biggr)\cdot \|f\|_{L_p(L_\infty(\mathbb{R})\otimes \mathcal{M})}.
	\end{aligned}
\end{equation*}
However, 
\begin{equation*}
	\begin{aligned}
		\|\langle h_I^i,b\rangle\|_\mathcal{M}{}&=\Big\|\Bigl\langle h_I^i, b-\big\langle \frac{\mathbbm{1}_I}{|I|},b \big\rangle \Bigl\rangle\Big\|_{\mathcal{M}}\\
		&\le \frac{1}{|I|^{1/2}}\int_I \Big\| b(x)-\big\langle \frac{\mathbbm{1}_I}{|I|},b \big\rangle \Big\|_{\mathcal{M}}dx\\
		&\le \biggl(\int_I \Big\| b(x)-\big\langle \frac{\mathbbm{1}_I}{|I|},b \big\rangle \Big\|^2_{\mathcal{M}}dx\biggr)^{1/2}.
	\end{aligned}
\end{equation*}
This implies
\begin{equation*}
	\begin{aligned}
		\|(\varLambda_b-(\pi_{b^*})^*)(f)\|_{h_{p,c}^d(\mathbb{R})}\lesssim_{d,p} \|b\|_{BMO^d_\mathcal{M}(\mathbb{R})}\|f\|_{L_p(L_\infty(\mathbb{R})\otimes \mathcal{M})}.
	\end{aligned}
\end{equation*}
Therefore,
\begin{equation*}
	\|\varLambda_b-(\pi_{b^*})^*\|_{L_p(L_\infty(\mathbb{R})\otimes \mathcal{M})\to h_{p,c}^d(\mathbb{R})}\lesssim_{d,p} \|b\|_{BMO^d_\mathcal{M}(\mathbb{R})}.
\end{equation*}
(2)  Let $f\in h_{p,c}^d(\mathbb{R})$ and $g\in L_{p'}(\mathbb{R},L_{p'}(\mathcal{M}))$. Note that 
\begin{equation*}
	(\varLambda_b-(\pi_{b^*})^*)^*(g)=\sum_{k\in\mathbb{Z}}d_k(d_kb^*\cdot d_kg).
\end{equation*}
Hence, repeating the proof of $(1)$, one has
\begin{equation*}
	\|(\varLambda_b-(\pi_{b^*})^*)^*(g)\|_{h_{p'}^c(\mathbb{R})}\lesssim_{d,p} \|b\|_{BMO^d_\mathcal{M}(\mathbb{R})}\|g\|_{L_{p'}(L_\infty(\mathbb{R})\otimes \mathcal{M})}.
\end{equation*}
This implies
\begin{equation*}
	\begin{aligned}
		|\langle (\varLambda_b-(\pi_{b^*})^*)(f),g\rangle|{}&=|\langle f,(\varLambda_b-(\pi_{b^*})^*)^*g\rangle|\\
		&\lesssim_{d,p} \|f\|_{h_{p,c}^d(\mathbb{R})}\|b\|_{BMO^d_\mathcal{M}(\mathbb{R})}\|g\|_{L_{p'}(L_\infty(\mathbb{R})\otimes \mathcal{M})}.
	\end{aligned}
\end{equation*}
Therefore,
\begin{equation*}
	\|\varLambda_b-(\pi_{b^*})^*\|_{h_{p,c}^d(\mathbb{R})\to L_p(L_\infty(\mathbb{R})\otimes \mathcal{M})}\lesssim_{d,p} \|b\|_{BMO^d_\mathcal{M}(\mathbb{R})}.
\end{equation*}
\end{proof}

 Now we prove Theorem \ref{1}.

\begin{proof}[Proof of Theorem \ref{1}]
(1)	If $2\le p<\infty$, by the triangle inequality, one has
	\begin{equation*}
		\begin{aligned}
			\|\Theta_b\|_{L_p(L_\infty(\mathbb{R})\otimes \mathcal{M})\to h_{p,c}^d(\mathbb{R})}
			{}&\le \|\pi_b+(\pi_{b^*})^*\|_{L_p(L_\infty(\mathbb{R})\otimes \mathcal{M})\to h_{p,c}^d(\mathbb{R})}\\
			&\quad+\|\varLambda_b-(\pi_{b^*})^*\|_{L_p(L_\infty(\mathbb{R})\otimes \mathcal{M})\to h_{p,c}^d(\mathbb{R})}.
		\end{aligned}
	\end{equation*}
From Proposition \ref{pibsumpibstar}, for any $f\in L_p(L_\infty(\mathbb{R})\otimes \mathcal{M})$, we have
\begin{equation*}
	\begin{aligned}
		\|(\pi_b+(\pi_{b^*})^*)(f)\|_{h_{p,c}^d(\mathbb{R})}{}&\lesssim_p \|(\pi_b+(\pi_{b^*})^*)(f)\|_{L_p(L_\infty(\mathbb{R})\otimes \mathcal{M})} \lesssim_{d,p}\|b\|_{BMO^d_\mathcal{M}(\mathbb{R})}\|f\|_{L_p(L_\infty(\mathbb{R})\otimes \mathcal{M})}.
	\end{aligned}
\end{equation*}
This implies that
\begin{equation*}
	\begin{aligned}
		\|\pi_b+(\pi_{b^*})^*\|_{L_p(L_\infty(\mathbb{R})\otimes \mathcal{M})\to h_{p,c}^d(\mathbb{R})}\lesssim_{d,p}\|b\|_{BMO^d_\mathcal{M}(\mathbb{R})}\|f\|_{L_p(L_\infty(\mathbb{R})\otimes \mathcal{M})}.
	\end{aligned}
\end{equation*}
Therefore from Proposition \ref{lambdabpib}
\begin{equation*}
	\|\Theta_b\|_{L_p(L_\infty(\mathbb{R})\otimes \mathcal{M})\to h_{p,c}^d(\mathbb{R})}\lesssim_{d,p} \|b\|_{BMO^d_\mathcal{M}(\mathbb{R})},
\end{equation*}
as desired.
(2) can be shown similarly.
\end{proof}

\bigskip

\section{Proof of Theorem \ref{3}}\label{pthm3}
Let $X$ be a Banach space. For $1\le p<\infty$, let $L_p(\mathbb{R},X)$ be the Banach space of all strongly Lebesgue measurable functions $f$ from $\mathbb{R}$ to $X$ for which
\begin{equation*}
	\|f\|_{L_p(\mathbb{R},X)}=\biggl(\int_\mathbb{R} \|f\|_X^p dm\biggr)^{1/p}<\infty.
\end{equation*}

The following lemma will be used in the proof of Theorem \ref{3}.
\begin{lem}\label{af}
	Let $1<p<\infty$ and $X$ be a Banach space. If $a\in BMO^d(\mathbb{R})$ and $f\in L_p(\mathbb{R},X)$, then
	\begin{equation*}
		\begin{aligned}
			\biggl\|\sup_{m\in\mathbb{Z}}\Bigl\|\mathbb{E}_{m}\Bigl(\sum_{j\ge m+1}d_ja\cdot d_jf\Bigr)\Bigr\|_{X}\biggl\|_{L_p(\mathbb{R})}\lesssim_{p}\|a\|_{BMO^d(\mathbb{R})}\|f\|_{L_p(\mathbb{R},X)}.
		\end{aligned}
	\end{equation*}
\end{lem}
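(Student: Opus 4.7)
The plan is to establish a pointwise identity revealing the structure of $T_mf:=\mathbb{E}_m\bigl(\sum_{j\ge m+1}d_ja\cdot d_jf\bigr)$ on each $d$-adic atom, and then to combine the local John--Nirenberg inequality for $a$ with Doob's maximal inequality for $f$.

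The first step is to show that, for every $m\in\mathbb{Z}$ and every $I\in\mathcal{D}_m$,
\begin{equation*}
T_mf\big|_I \;=\; \frac{1}{|I|}\int_I (a-a_I)(f-f_I)\,dy,
\end{equation*}
where $a_I$ and $f_I$ denote the averages of $a$ and $f$ on $I$. I would prove this by truncating the series at level $N$ and using the telescoping identities $\sum_{m<j\le N}d_ja=a_N-a_m$ and $\sum_{m<j\le N}d_jf=f_N-f_m$: on $I$ one has $(a_N-a_I)(f_N-f_I)=\sum_{m<j,k\le N}d_ja\cdot d_kf$, and every cross term with $j\ne k$ integrates to zero over $I$ by the standard orthogonality argument (if $j>k$, then $\mathbbm{1}_I\cdot d_kf$ is $\mathcal{F}_{j-1}$-measurable while $\mathbb{E}_{j-1}(d_ja)=0$). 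Passing $N\to\infty$ in $L^{q'}(I)\times L^q(I,X)$ for some fixed $q\in(1,p)$ then yields the identity.

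Once the identity is in hand, H\"older's inequality on $I\in\mathcal{D}_m$ gives, for $x\in I$,
\begin{equation*}
\|T_mf(x)\|_X \;\le\; \Bigl(\frac{1}{|I|}\int_I|a-a_I|^{q'}\,dy\Bigr)^{1/q'}\Bigl(\frac{1}{|I|}\int_I\|f-f_I\|_X^q\,dy\Bigr)^{1/q}.
\end{equation*}
The first factor is $\lesssim_q\|a\|_{BMO^d(\mathbb{R})}$ by the scalar John--Nirenberg inequality, which the remark preceding Lemma~\ref{Musat} records in precisely the form $\|a\|_{BMO^d(\mathbb{R})}\approx_r\sup_n\|\mathbb{E}_n|a-a_{n-1}|^r\|_\infty^{1/r}$ valid for every finite $r$; the second factor is bounded by $2\bigl(\mathbb{E}_m(\|f\|_X^q)(x)\bigr)^{1/q}$ by Jensen's inequality together with the triangle inequality. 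Taking the supremum over $m$ and then the $L_p(\mathbb{R})$-norm, Doob's maximal inequality on $L_{p/q}(\mathbb{R})$ (valid because $p/q>1$) delivers the target bound $\lesssim_{p}\|a\|_{BMO^d(\mathbb{R})}\|f\|_{L_p(\mathbb{R},X)}$.

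The only place requiring care is legitimizing the interchange of the infinite sum with the integral in the identity of Step~1, but this is harmless since the telescoping partial sums converge in $L^{q'}(I)$ and $L^q(I,X)$ respectively. After that, the argument is a textbook $BMO\times L_p$ estimate, with the auxiliary exponent $q$ chosen in $(1,p)$ precisely so that both John--Nirenberg (on the $q'$ side) and Doob's inequality (on the $L_{p/q}$ side) remain applicable.
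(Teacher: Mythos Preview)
Your proposal is correct and follows essentially the same approach as the paper: both rewrite $\mathbb{E}_m\bigl(\sum_{j\ge m+1}d_ja\cdot d_jf\bigr)$ as $\mathbb{E}_m\bigl((a-a_m)(f-f_m)\bigr)$, apply H\"older with an auxiliary exponent in $(1,p)$, invoke John--Nirenberg on the scalar factor, and finish with Doob's maximal inequality on $L_{p/q}$. The only cosmetic differences are that the paper fixes $r=(p+1)/2$ explicitly and writes the identity via conditional expectations rather than averages over atoms, and that you spell out the orthogonality justification for the identity which the paper uses without comment.
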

\begin{proof}
	Let $1<r=\frac{p+1}{2}<p$. Using the H\"{o}lder inequality and the martingale John-Nirenberg inequality, we obtain
	\begin{equation*}
		\begin{aligned}
			\biggl\|\mathbb{E}_{m}\Bigl(\sum_{j\ge m+1}d_ja\cdot d_jf\Bigr)\biggr\|_{X}{}
			&=\big\|\mathbb{E}_{m}\big((a-a_m)(f-f_m)\big)\big\|_{X}\\
			&\le\mathbb{E}_{m}\big(|a-a_m|\cdot\|f-f_m\|_{X}\big)\\
			&\le \bigl(\mathbb{E}_{m}\big(|a-a_m|^{r'}\big)\bigr)^{1/{r'}}\bigl(\mathbb{E}_{m}\big(\|f-f_m\|_{X}^{r}\big)\bigr)^{1/r}\\
			&\lesssim_p \|a\|_{BMO^d(\mathbb{R})}\biggl(\bigl(\mathbb{E}_{m}\|f\|_{X}^{r}\bigr)^{1/r}+\bigl(\mathbb{E}_{m}\|f_m\|_{X}^{r}\bigr)^{1/r}\biggr)\\
			&=\|a\|_{BMO^d(\mathbb{R})}\cdot\biggl(\bigl(\mathbb{E}_{m}\|f\|_{X}^{r}\bigr)^{1/r}+\|f_m\|_{X}\biggr).
		\end{aligned}
	\end{equation*}
 Note that $p/r>1$. Hence, this implies that
\begin{equation*}
	\begin{aligned}
	     {}&\quad\biggl\|\sup_{m\in\mathbb{Z}}\Bigl\|\mathbb{E}_{m}\Bigl(\sum_{j\ge m+1}d_ja\cdot d_jf\Bigr)\Bigr\|_{X}\biggl\|_{L_p(\mathbb{R})}\\
	     &\lesssim_p \|a\|_{BMO^d(\mathbb{R})}\cdot\biggl\|\sup_{m\in\mathbb{Z}}\Bigl(\bigl(\mathbb{E}_{m}\|f\|_{X}^{r}\bigr)^{1/r}+\|f_m\|_{X}\Bigr)\bigg\|_{L_{p}(\mathbb{R})}\\
		&\le \|a\|_{BMO^d(\mathbb{R})}\cdot \biggl(\Bigl\|\sup_{m\in\mathbb{Z}}\mathbb{E}_{m}\|f\|_{X}^{r}\Big\|^{1/r}_{L_{p/r}(\mathbb{R})}+\Bigl\|\sup_{m\in\mathbb{Z}}\|f_m\|_{X}\Big\|_{L_{p}(\mathbb{R})}\biggr)\\
		&\lesssim_p \|a\|_{BMO^d(\mathbb{R})}\cdot \bigg(\Big\|\|f\|_{X}^{r}\Big\|^{1/r}_{L_{p/r}(\mathbb{R})}+\|f\|_{L_p(\mathbb{R},X)}\bigg)\\
		&=  2\|a\|_{BMO^d(\mathbb{R})}\|f\|_{L_p(\mathbb{R},X)},
	\end{aligned}
\end{equation*}
where in the second inequality we use the triangle inequality, and the third inequality is from the vector-valued Doob maximal inequality.
\end{proof}

Finally, we prove Theorem \ref{3}.
\begin{proof}[Proof of Theorem \ref{3}]
For any $ f\in L_p(L_\infty(\mathbb{R})\otimes \mathcal{M})$,
\begin{equation}\label{piarb}
	\begin{aligned}
		[\pi_{a}, R_b](f){}&=\pi_{a} (R_b(f))-R_b (\pi_{a}(f))\\
		&=\sum_{k\in\mathbb{Z}} d_ka\cdot\mathbb{E}_{k-1} \biggl(\sum_{j\in\mathbb{Z}} b_{j-1}\cdot d_jf \biggr)- \sum_{k\in\mathbb{Z}} b_{k-1} \cdot d_k \biggl(\sum_{j\in\mathbb{Z}} d_ja\cdot f_{j-1}\biggr)\\
		&=\sum_{k\in\mathbb{Z}} d_ka\cdot \biggl(\sum_{j\le k-1}b_{j-1}\cdot d_jf\biggr)-\sum_{k\in\mathbb{Z}} b_{k-1}\cdot d_ka\cdot f_{k-1}\\
		&=\sum_{k\in\mathbb{Z}} d_ka\cdot \biggl(\sum_{j\le k-1}b_{j-1}\cdot d_jf- b_{k-1}\cdot f_{k-1}\biggr)\\
		&=-\sum_{k\in\mathbb{Z}} d_ka\cdot \biggl( \sum_{j\le k-1}d_jb\cdot d_jf\biggr)-\sum_{k\in\mathbb{Z}} d_ka\cdot \biggl(\sum_{j\le k-1}d_jb\cdot f_{j-1}\biggr)\\
		&=-\sum_{k\in\mathbb{Z}} d_ka\cdot \biggl( \sum_{j\le k-1}d_jb\cdot d_jf\biggr)-\pi_{a}(\pi_b(f)).
	\end{aligned}
\end{equation}
Define $ \forall f\in L_p(L_\infty(\mathbb{R})\otimes \mathcal{M})$
\begin{equation*}
	V_{a,b}(f)=\sum_{k\in\mathbb{Z}} d_ka\cdot \mathbb{E}_{k-1}\biggl(\sum_{j\ge k} d_j{b}\cdot d_j{f}\biggr).
\end{equation*}
Then
	\begin{equation*}
	\begin{aligned}
		[\pi_{a}, R_b](f){}&=-\pi_a(\varLambda_{{b}}({f}))+V_{a,b}(f)-\pi_{a}(\pi_b(f))=-\pi_{a}(\Theta_b(f))+V_{a,b}(f).
	\end{aligned}
\end{equation*}
This implies that
\begin{equation}\label{piamb}
	\begin{aligned}
		[\pi_a,M_b]=[\pi_a,\Theta_b]+[\pi_{a}, R_b]{}&=\pi_a\Theta_b-\Theta_b\pi_a-\pi_{a}\Theta_b+V_{a,b}\\
		&=-\Theta_b\pi_a+V_{a,b}\\
		&=-(\pi_b+(\pi_{b^*})^*)\pi_a-(\varLambda_b-(\pi_{b^*})^*)\pi_a+V_{a,b}.
	\end{aligned}
\end{equation}
From Proposition \ref{pibsumpibstar} and Theorem \ref{2} it follows that
\begin{equation}
	\begin{aligned}
		{}&\|(\pi_b+(\pi_{b^*})^*)\pi_a\|_{L_p(L_\infty(\mathbb{R})\otimes \mathcal{M})\to L_p(L_\infty(\mathbb{R})\otimes \mathcal{M})}\\
		&\le \|\pi_b+(\pi_{b^*})^*\|_{L_p(L_\infty(\mathbb{R})\otimes \mathcal{M})\to L_p(L_\infty(\mathbb{R})\otimes \mathcal{M})}\|\pi_a\|_{L_p(L_\infty(\mathbb{R})\otimes \mathcal{M})\to L_p(L_\infty(\mathbb{R})\otimes \mathcal{M})}\\
		&\lesssim_{d,p} \|a\|_{BMO^d(\mathbb{R})}\|b\|_{BMO^d_{\mathcal{M}}(\mathbb{R})}.
	\end{aligned}
\end{equation}
	Note that for any $f\in L_p(L_\infty(\mathbb{R})\otimes \mathcal{M})$ and $g\in L_{p'}(L_\infty(\mathbb{R})\otimes \mathcal{M})$, 
	\begin{equation*}
		\begin{aligned}
		    \langle V_{a,b}(f),g\rangle{}
			&=\sum_{k\in\mathbb{Z}}\biggl\langle d_ka\cdot \mathbb{E}_{k-1}\Bigl(\sum_{j\ge k}d_jb\cdot d_jf\Bigr),g\biggr\rangle\\
			&=\sum_{k\in\mathbb{Z}}\biggl\langle \sum_{j\ge k}d_jb\cdot d_jf, \mathbb{E}_{k-1}(d_k{\overline{a}}\cdot d_kg)\biggr\rangle\\
			&=\sum_{k\in\mathbb{Z}}\biggl\langle d_kb,\sum_{j\le k}\mathbb{E}_{j-1}(d_j\overline{a}\cdot d_jg)\cdot d_kf^*  \biggr\rangle\\
			&=\biggl\langle b,\sum_{k\in\mathbb{Z}}\sum_{j\le k}\mathbb{E}_{j-1}(d_j\overline{a}\cdot d_jg)\cdot d_kf^*  \biggr\rangle.
		\end{aligned}
	\end{equation*}
On the other hand,
\begin{equation*}
	\begin{aligned}
		\big\langle (\varLambda_b-(\pi_{b^*})^*)\pi_a(f),g\big\rangle{}
		&=\biggl\langle \sum_{k\in\mathbb{Z}}d_k(d_kb\cdot d_k(\pi_a(f))),g\biggr\rangle\\
		&=\sum_{k\in\mathbb{Z}}\langle d_kb\cdot d_ka\cdot f_{k-1},d_kg\rangle\\
		&=\biggl\langle b, \sum_{k\in\mathbb{Z}}d_k(d_k\overline{a}\cdot d_kg )\cdot f_{k-1}^*\bigg\rangle.
	\end{aligned}
\end{equation*}
Hence
	\begin{equation}\label{vabf}
		\begin{aligned}
			\big\langle V_{a,b}(f)-(\varLambda_b-(\pi_{b^*})^*)\pi_a(f),g\big\rangle
			{}&=\langle b, W_{a,f,g}\rangle,
		\end{aligned}
	\end{equation}
where 
\begin{equation*}
	\begin{aligned}
		W_{a,f,g}:=\sum_{k\in\mathbb{Z}}\sum_{j\le k}\mathbb{E}_{j-1}(d_j\overline{a}\cdot d_jg)\cdot d_kf^*-\sum_{k\in\mathbb{Z}}d_k(d_k\overline{a}\cdot d_kg)\cdot f_{k-1}^*.
	\end{aligned}
\end{equation*}
We have
	\begin{equation*}
		\begin{aligned}
			|\langle b, W_{a,f,g}\rangle|\lesssim \|b\|_{BMO^d_{\mathcal{M}}(\mathbb{R})}\|W_{a,f,g}\|_{H^d_{1,\max}(\mathbb{R},\mathcal{M})}.
		\end{aligned}
	\end{equation*}
We calculate directly that for any $m\in \mathbb{Z}$,
\begin{equation*}
	\begin{aligned}
		&{}\quad\mathbb{E}_m(W_{a,f,g})\\
		&=\sum_{k\le m}\sum_{j\le k}\mathbb{E}_{j-1}(d_j\overline{a}\cdot d_jg)\cdot d_kf^*-\sum_{k\le m}d_k(d_k\overline{a}\cdot d_kg)\cdot f_{k-1}^*\\
		&=\sum_{j\le m}\mathbb{E}_{j-1}(d_j\overline{a}\cdot d_jg)\cdot (f_m^*-f_{j-1}^*)-\sum_{j\le m}d_j(d_j\overline{a}\cdot d_jg)\cdot f_{j-1}^*\\
		&=\sum_{j\le m}\mathbb{E}_{j-1}(d_j\overline{a}\cdot d_jg)\cdot f_m^*-\sum_{j\le m}(d_j\overline{a}\cdot d_jg)\cdot f_{j-1}^*\\
		&=\mathbb{E}_{m}\biggl(\sum_{j\le m}\mathbb{E}_{j-1}(d_j\overline{a}\cdot d_jg)\biggr)\cdot f_m^*-\sum_{j\le m}(d_j\overline{a}\cdot d_jg)\cdot f_{j-1}^*\\
		&=\mathbb{E}_{m}\biggl(\sum_{j\in\mathbb{Z}}\mathbb{E}_{j-1}(d_j\overline{a}\cdot d_jg)\biggr)\cdot f_m^*-\mathbb{E}_{m}\biggl(\sum_{j\ge m+1}\mathbb{E}_{j-1}(d_j\overline{a}\cdot d_jg)\biggr)\cdot f_m^*-\sum_{j\le m}(d_j\overline{a}\cdot d_jg)\cdot f_{j-1}^*\\
		&=\mathbb{E}_{m}\biggl(\sum_{j\in\mathbb{Z}}\mathbb{E}_{j-1}(d_j\overline{a}\cdot d_jg)\biggr)\cdot f_m^*-\mathbb{E}_{m}\biggl(\sum_{j\ge m+1}d_j\overline{a}\cdot d_jg\biggr)\cdot f_m^*-\sum_{j\le m}(d_j\overline{a}\cdot d_jg)\cdot f_{j-1}^*.
	\end{aligned}
\end{equation*}
Hence
	\begin{equation*}
		\begin{aligned}
			{}&\quad\|W_{a,f,g}\|_{H^d_{1,\max}(\mathbb{R},\mathcal{M})}\\
			&=\bigg\|\sup_{m\in\mathbb{Z}}\|\mathbb{E}_m(W_{a,f,g})\|_{L_1(\mathcal{M})}\bigg\|_{L_1(\mathbb{R})}\\
			&\le \bigg\|\sup_{m\in\mathbb{Z}}\Bigl\|\mathbb{E}_{m}\Bigl(\sum_{j\in\mathbb{Z}}\mathbb{E}_{j-1}(d_j\overline{a}\cdot d_jg)\Bigr)\cdot f_m^*\Bigr\|_{L_1(\mathcal{M})}\bigg\|_{L_1(\mathbb{R})}\\
			&\quad+\bigg\|\sup_{m\in\mathbb{Z}}\Bigl\|\mathbb{E}_{m}\Bigl(\sum_{j\ge m+1}d_j\overline{a}\cdot d_jg\Bigr)\cdot f_m^*\Bigr\|_{L_1(\mathcal{M})}\bigg\|_{L_1(\mathbb{R})}+\bigg\|\sup_{m\in\mathbb{Z}}\Bigl\|\sum_{j\le m}(d_j\overline{a}\cdot d_jg)\cdot f_{j-1}^*\Bigr\|_{L_1(\mathcal{M})}\bigg\|_{L_1(\mathbb{R})}\\
			&:=\text{(I)}+\text{(II)}+\text{(III)}.
		\end{aligned}
	\end{equation*}

For the term $(\text{I})$, from \eqref{pistar}, we have 
$$  \sum_{j\in\mathbb{Z}}\mathbb{E}_{j-1}(d_j\overline{a}\cdot d_jg)=(\pi_a)^*(g).  $$
Thus 
\begin{equation}\label{I}
	\begin{aligned}
		(\text{I}){}&\le \biggl\|\sup_{m\in\mathbb{Z}}\Bigl\|\mathbb{E}_{m}((\pi_a)^*(g))\Bigr\|_{L_{p'}(\mathcal{M})}\cdot \sup_{m\in\mathbb{Z}}\|f_m\|_{L_p(\mathcal{M})}\biggr\|_{L_1(\mathbb{R})}\\
		&\le \biggl\|\sup_{m\in\mathbb{Z}}\Bigl\|\mathbb{E}_{m}((\pi_a)^*(g))\Bigr\|_{L_{p'}(\mathcal{M})}\biggr\|_{L_{p'}(\mathbb{R})}\cdot \biggl\|\sup_{m\in\mathbb{Z}}\|f_m\|_{L_p(\mathcal{M})}\biggr\|_{L_p(\mathbb{R})}\\
		&\lesssim_p \|(\pi_{a})^*(g)\|_{L_{p'}(L_\infty(\mathbb{R})\otimes \mathcal{M})}\|f\|_{L_p(L_\infty(\mathbb{R})\otimes \mathcal{M})}\\
		&\lesssim_{d,p} \|a\|_{BMO^d(\mathbb{R})}\|g\|_{L_{p'}(L_\infty(\mathbb{R})\otimes \mathcal{M})}\|f\|_{L_p(L_\infty(\mathbb{R})\otimes \mathcal{M})},
	\end{aligned}
\end{equation}
where the first inequality and the second inequality are both due to the H\"{o}lder inequality, and the third is from the  vector-valued Doob maximal inequality for $L_p(\mathcal{M})$-valued functions.

For the term $\text{(II)}$, we calculate that
\begin{equation}\label{II}
	\begin{aligned}
		\text{(II)}{}&\le \bigg\|\sup_{m\in\mathbb{Z}}\|f_m\|_{L_p(\mathcal{M})}\bigg\|_{L_{p}(\mathbb{R})}\cdot \bigg\|\sup_{m\in\mathbb{Z}}\Big\|\mathbb{E}_{m}\Bigl(\sum_{j\ge m+1}d_j\overline{a}\cdot d_jg\Bigr)\Big\|_{L_{p'}(\mathcal{M})}\bigg\|_{L_{p'}(\mathbb{R})}\\
		&\lesssim_p \|f\|_{L_p(L_\infty(\mathbb{R})\otimes \mathcal{M})}\|a\|_{BMO^d(\mathbb{R})}\|g\|_{L_{p'}(L_\infty(\mathbb{R})\otimes \mathcal{M})},
	\end{aligned}
\end{equation}
where for the first inequality we have used the H\"{o}lder inequality twice, the second is from the vector-valued Doob maximal inequality for $L_p(\mathcal{M})$-valued functions and Lemma \ref{af}.

For the term $\text{(III)}$, note that
$$ \sum_{j\le m}(d_j\overline{a}\cdot d_jg)\cdot f_{j-1}^*=\sum_{j\le m}d_jg\cdot (d_j\overline{a}\cdot f_{j-1}^*)=\sum_{j\le m}d_jg\cdot (d_j(\pi_a(f))^*).  $$
If $2\le p<\infty$, due to Theorem \ref{BG}, $\forall \varepsilon>0$, we choose $g_1$ and $g_2$ such that $g=g_1+g_2$ and
\begin{equation*}
	\begin{aligned}
		\|S(g_1)\|_{L_{p'}(L_\infty(\mathbb{R})\otimes \mathcal{M})}+\|S(g_2^*)\|_{L_{p'}(L_\infty(\mathbb{R})\otimes \mathcal{M})}\lesssim_p \|g\|_{L_{p'}(L_\infty(\mathbb{R})\otimes \mathcal{M})}+\varepsilon.
	\end{aligned}
\end{equation*}
This implies that by the H\"{o}lder inequality
	\begin{equation*}
		\begin{aligned}
			(\text{III}){}&\leq \bigg\|\sup_{m\in\mathbb{Z}}\Bigl\|\sum_{j\le m}d_jg_1\cdot (d_j(\pi_a(f))^*)\Bigr\|_{L_1(\mathcal{M})}\bigg\|_{L_1(\mathbb{R})}+\bigg\|\sup_{m\in\mathbb{Z}}\Bigl\|\sum_{j\le m}d_jg_2\cdot (d_j((\pi_a(f))^*))\Bigr\|_{L_1(\mathcal{M})}\bigg\|_{L_1(\mathbb{R})}\\
			&= \biggl\| \|S(g_1)\|_{L_{p'}(\mathcal{M})}\|S(\pi_a(f))\|_{L_p(\mathcal{M})}\biggr\|_{L_1(\mathbb{R})}+\biggl\| \|S(g_2^*)\|_{L_{p'}(\mathcal{M})}\|S((\pi_a(f))^*)\|_{L_p(\mathcal{M})}\biggr\|_{L_1(\mathbb{R})}\\
			&\le \|S(g_1)\|_{L_{p'}(L_\infty(\mathbb{R})\otimes \mathcal{M})}\|S(\pi_a(f))\|_{L_p(L_\infty(\mathbb{R})\otimes \mathcal{M})}+\|S(g_2^*)\|_{L_{p'}(L_\infty(\mathbb{R})\otimes \mathcal{M})}\|S((\pi_a(f))^*)\|_{L_p(L_\infty(\mathbb{R})\otimes \mathcal{M})}\\
			&\lesssim_{p} (\|g\|_{L_{p'}(L_\infty(\mathbb{R})\otimes \mathcal{M})}+\varepsilon)\cdot \|\pi_a(f)\|_{L_p(L_\infty(\mathbb{R})\otimes \mathcal{M})}\\
			&\lesssim_{d,p} (\|g\|_{L_{p'}(L_\infty(\mathbb{R})\otimes \mathcal{M})}+\varepsilon)\cdot \|a\|_{BMO^d(\mathbb{R})}\|f\|_{L_p(L_\infty(\mathbb{R})\otimes \mathcal{M})},
		\end{aligned}
	\end{equation*}
where in the third inequality we use the H\"{o}lder inequality. Letting $\varepsilon\to 0$, we get
\begin{equation}\label{III}
	\begin{aligned}
		(\text{III})\lesssim_{d,p} \|g\|_{L_{p'}(L_\infty(\mathbb{R})\otimes \mathcal{M})}\|a\|_{BMO^d(\mathbb{R})}\|f\|_{L_p(L_\infty(\mathbb{R})\otimes \mathcal{M})}.
	\end{aligned}
\end{equation}
For the case when $1<p\le 2$, we exchange $f$ with $g$ and repeat the above argument, then \eqref{III} can be shown similarly.
Hence from \eqref{I}, \eqref{II} and \eqref{III} we have
\begin{equation}\label{vab2}
	\begin{aligned}
		|\langle b, W_{a,f,g}\rangle|{}&\le \|b\|_{BMO^d_{\mathcal{M}}(\mathbb{R})}\|W_{a,f,g}\|_{H^d_{1,\max}(\mathbb{R},\mathcal{M})}\\
		&\lesssim_{d,p} \|a\|_{BMO^d(\mathbb{R})}\|b\|_{BMO^d_{\mathcal{M}}(\mathbb{R})}\|g\|_{L_{p'}(L_\infty(\mathbb{R})\otimes \mathcal{M})}\|f\|_{L_{p}(\mathbb{R},L_p(\mathcal{M}))}.
	\end{aligned}
\end{equation}
Due to \eqref{vabf} and \eqref{vab2} we deduce that
\begin{equation*}
	\begin{aligned}
		|\big\langle V_{a,b}(f)-(\varLambda_b-(\pi_{b^*})^*)\pi_a(f),g\big\rangle|\lesssim_{d,p} \|a\|_{BMO^d(\mathbb{R})}\|b\|_{BMO^d_{\mathcal{M}}(\mathbb{R})}\|g\|_{L_{p'}(L_\infty(\mathbb{R})\otimes \mathcal{M})}\|f\|_{L_p(L_\infty(\mathbb{R})\otimes \mathcal{M})},
	\end{aligned}
\end{equation*}
which yields
\begin{equation*}
	\begin{aligned}
		\|V_{a,b}\|_{L_p(L_\infty(\mathbb{R})\otimes \mathcal{M})\to L_p(L_\infty(\mathbb{R})\otimes \mathcal{M})}\lesssim_{d,p} \|a\|_{BMO^d(\mathbb{R})}\|b\|_{BMO^d_{\mathcal{M}}(\mathbb{R})}.
	\end{aligned}
\end{equation*}
Therefore, we conclude
\begin{equation*}
	\begin{aligned}
			\|[\pi_a,M_b]\|_{L_p(L_\infty(\mathbb{R})\otimes \mathcal{M})\to L_p(L_\infty(\mathbb{R})\otimes \mathcal{M})}\lesssim_{d,p} \|a\|_{BMO^d(\mathbb{R})}\|b\|_{BMO^d_{\mathcal{M}}(\mathbb{R})}.
	\end{aligned}
\end{equation*}

\end{proof}

\begin{rem}\label{Tstar2est}
	Let $1<p<\infty$. If $a\in BMO^d(\mathbb{R})$ and $b\in BMO^d_\mathcal{M}(\mathbb{R})$, then $[\pi_a^*,M_b]$ is bounded on $L_p(L_\infty(\mathbb{R})\otimes \mathcal{M})$ 
	and
	\begin{equation*}
		\|[\pi_a^*,M_b]\|_{L_p(L_\infty(\mathbb{R})\otimes \mathcal{M})\to L_p(L_\infty(\mathbb{R})\otimes \mathcal{M})}\lesssim_{d,p} \|a\|_{BMO^d(\mathbb{R})}\|b\|_{BMO^d_{\mathcal{M}}(\mathbb{R})}.
	\end{equation*}
This is because of Theorem \ref{3} and
\begin{equation*}
[\pi_a^*,M_b]^*=-[\pi_a,M_{b^*}].
\end{equation*}
\end{rem}


\bigskip {\textbf{Acknowledgments.}} We thank Professor Quanhua Xu for his careful reading of different versions of this paper. We are also very grateful to Professor Guixiang Hong for helpful suggestions.

This work is partially supported by the French ANR project (No. ANR-19-CE40-0002).

\bibliographystyle{myrefstyle}
\bibliography{ref666}

\end{document}